\documentclass[12pt]{amsart}
\usepackage{amsmath,amssymb,latexsym}
\usepackage{fullpage}

\theoremstyle{plain}
\newtheorem{theorem}{Theorem}
\newtheorem{proposition}{Proposition}
\newtheorem{lemma}{Lemma}
\newtheorem{definition}{Definition}

\def\A{\mathbb{A}}

\def\Ind{\text{Ind}}
\def\Mat{\text{Mat}}

\begin{document}

\title{Criteria for the Existence of Cuspidal Theta Representations}
\author{Solomon Friedberg}
\author{David Ginzburg}
\address{Department of Mathematics, Boston College, Chestnut Hill, MA 02467-3806, USA}
\address{School of Mathematical Sciences, Tel Aviv University, Ramat Aviv, Tel Aviv 6997801,
Israel}
\thanks{This work was supported by the US-Israel Binational Science Foundation,
grant number 2012019, and by the National Security Agency, grant number
H98230-13-1-0246 and the National Science Foundation, grant number 1500977 (Friedberg).}
\subjclass[2010]{Primary 11F27; Secondary 11F55, 11F70}
\keywords{Metaplectic group, Eisenstein series, residual representation, theta representation, cuspidal theta representation, 
unipotent orbit}
\begin{abstract} 
Theta representations appear globally as the residues of Eisenstein series on covers of groups; their
unramified
local constituents may be characterized as subquotients of certain principal series.   A cuspidal theta
representation is one which is equal to the local twisted theta representation at almost all places.
Cuspidal theta representations are known to exist but only for covers of $GL_j$, $j\leq 3$.
In this paper we establish necessary conditions for the existence of cuspidal theta representations on the $r$-fold metaplectic
cover of the general linear group of arbitrary rank.  
\end{abstract}

\maketitle

\section{Introduction and Main Results}\label{setup}

Let $r\ge2$, let $F$ be a number field containing a full set of $r$-th roots of unity $\mu_r$, and let $\A$ denote
the adeles of $F$.  For $n\geq2$, 
let $GL_n^{(r)}({\A})$ denote an $r$-fold cover of the general linear group. This
group is a cover of $GL_n(\A)$ with fibers given by $\mu_r$ and multiplication defined by a certain two-cocycle $\sigma$.
The group $GL_n^{(r)}(\A)$ is
obtained by piecing together local metaplectic groups $GL_n^{(r)}(F_\nu)$ over the places $\nu$ of $F$ (the group $GL_n^{(r)}(F_\nu)$
is, however, not the $F_\nu$-points of an algebraic group).  Following Takeda \cite{Tak},
we shall use
the local cocycle give by Banks-Levy-Sepanski \cite{B-L-S}, which is block-compatible, and adjust it by a 
coboundary to construct a global cocycle. 

Let  $\Theta_n^{(r)}$ denote the theta representation
on the group $GL_n^{(r)}({\A})$.  This representation was defined in Kazhdan-Patterson  \cite{K-P} 
using the residues of  Eisenstein series,
as follows.  (Kazhdan and Patterson work with a different cocycle than Takeda but the groups are isomorphic.)
Let $B_n$ be the standard Borel subgroup of $GL_n$, and $T_n\subseteq B_n$ denote the maximal torus of $GL_n$.
Let $\mathbf{s}$ be a multi-complex variable, and define the character 
$\mu_{{\mathbf s}}$ of $T_n(\A)$ by $\mu_{{\mathbf s}}(\text{diag}(a_1,\ldots,a_n))=\prod_i |a_i|^{s_i}$.
If $H$ is an algebraic subgroup of $GL_n$, let $H^{(r)}(F_\nu)$ (resp.\ $H^{(r)}(\A)$) denote the full inverse image
of $H(F_\nu)$ (resp.\ $H(\A)$) in  $GL_n^{(r)}(F_\nu)$ (resp.\ $GL_n^{(r)}(\A)$).
Let $Z(T_n^{(r)}(\A))$ denote the center of $T_n^{(r)}(\A)$.
Then $\mu_{{\mathbf s}}$ uniquely determines a genuine character of $Z(T_n^{(r)}(\A))$. Choose a maximal abelian
subgroup $A$ of  $T_n^{(r)}(\A)$, extend this
character to a character of $A$, and induce it to $T_n^{(r)}(\A).$ Then extend trivially to $B_n^{(r)}(\A)$ using the canonical 
projection from $B_n^{(r)}(\A)$ to $T_n^{(r)}(\A)$, and further induce it to the group $GL_n^{(r)}({\A})$. 
We write this induced representation $\Ind_{B_n^{(r)}(\A)}^{GL_n^{(r)}({\A})}\mu_{{\mathbf s}}$.  It follows from \cite{K-P} that this construction is independent of the choice of $A$ and of the extension of characters. Forming the Eisenstein series $E(g,{\mathbf s})$ attached to this induced representation, it follows from \cite{K-P}, that when $\mu_{{\mathbf s}}=\delta_{B_n}^{\frac{r+1}{2r}}$
(with $\delta_{B_n}$ the modular function of $B_n$), this Eisenstein series has a nonzero residue representation.  This is the representation $\Theta_n^{(r)}$.

Let $\nu$ be a finite place for $F$ such that $|r|_\nu=1$. Defining similar groups over the local field $F_\nu$, it
follows from \cite{K-P} that the local induced representation $\Ind_{B_n^{(r)}(F_\nu)}^{GL_n^{(r)}(F_\nu)}\delta_{B_n}^{\frac{r+1}{2r}}$ has a unique unramified subquotient which we again denote by
$\Theta_n^{(r)}$. This representation is also the unique unramified subrepresentation of
$\Ind_{B_n^{(r)}(F_\nu)}^{GL_n^{(r)}(F_\nu)}\delta_{B_n}^{\frac{r-1}{2r}}$.
If  $\chi_\nu$ denotes an unramified character of $F_\nu^\times$, then the induced representation 
$\Ind_{B_n^{(r)}(F_\nu)}^{GL_n^{(r)}(F_\nu)}\chi_\nu^{\frac{1}{r}} \delta_{B_n}^{\frac{r-1}{2r}}$
is also reducible, and one can define the local twisted theta representation $\Theta_{n,\chi_\nu}^{(r)}$ as the unique
unramified sub-representation. Here $\chi_\nu^{\frac{1}{r}}$ is defined on the group $Z(T_n^{(r)}(F_\nu))$ to be the character such that $\chi_\nu^{\frac{1}{r}}((\text{diag}(a_1^r, \ldots, a_n^r),\zeta))=\zeta \,\chi_\nu(a_1\ldots a_n)$. 

Returning to the global case, we have
\begin{definition}\label{def1}
An automorphic representation $\pi$ of $GL_n^{(r)}({\A})$ is called a theta representation if for almost all places $\nu$ there are unramified characters $\chi_\nu$ such that the unramified constituent of $\pi$ is equal to $\Theta_{n,\chi_\nu}^{(r)}$. If $\pi$ is cuspidal, we say that $\pi$ is a cuspidal theta representation.
\end{definition}
The interesting cases of such theta representations are when the local characters $\chi_\nu$ are the unramified characters of a global automorphic character $\chi$. We shall write $\Theta_{n,\chi}^{(r)}$ for such a representation.

Examples of such representations may be constructed as follows.  Suppose that $\chi=\chi_1^r$ for some global character $\chi_1$. Then one can construct theta representations $\Theta_{n,\chi}^{(r)}$ by means of residues of Eisenstein series, by \cite{K-P}. (The case $\chi=1$ was described above.) However, these representations are never cuspidal.

In Flicker \cite{F}, a classification of all theta representations for the covering groups $GL_2^{(r)}({\A})$, $r\geq2$,
with $c=0$ in the sense of \cite{K-P} is given
using the trace formula. The case $n=r=2$ was also studied by Gelbart and Piatetski-Shapiro \cite{G-PS}.  When $n=r=3$, 
Patterson and Piatetski-Shapiro \cite{P-PS} constructed a cuspidal theta representation $\Theta_{n,\chi}^{(r)}$ 
for any $\chi$ which is not of the form $\chi_1^3$, again for the cover with $c=0$. 
This construction applied the converse theorem. No other examples of such representations are known.

The basic problem is then to understand for what values of $r$ and $n$, and for what characters $\chi$, there exists a cuspidal theta representation $\Theta_{n,\chi}^{(r)}$. We shall give a necessary condition for the existence of such
a representation.  However, we
do not determine whether or not these conditions are sufficient.

First,  if $r<n$ such cuspidal representations do not exist. This follows trivially since every cuspidal 
automorphic representation of $GL_n^{(r)}({\A})$ must be generic, but the local unramified  representation $\Theta_{n,\chi_\nu}^{(r)}$ is not generic if $r<n$. Hence we may assume that $r\ge n$. Our main result is

\begin{theorem}\label{th1}
Fix a natural number $r$, and an automorphic character $\chi$ of $GL_1({\A})$. Then there is at most one natural number $n$ such that there is a nonzero cuspidal theta representation $\Theta_{n,\chi}^{(r)}$. Moreover, if such $n$ exists, then $n$ 
divides $r$. If a cuspidal theta representation $\Theta_{n,\chi}^{(r)}$ exists for some $n$ which divides $r$, then $\chi\ne\chi_1^r$ for any character $\chi_1$.
\end{theorem}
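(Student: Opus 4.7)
My plan separates naturally into two parts: the third assertion follows directly from Kazhdan--Patterson, while the divisibility and uniqueness require a genuine argument via Eisenstein series and the unipotent-orbit structure of Fourier coefficients on the metaplectic cover.

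For the third assertion, suppose $\chi = \chi_1^r$ for some automorphic character $\chi_1$. Then the twisted version of the construction recalled in the introduction from \cite{K-P} realizes $\Theta_{n,\chi}^{(r)}$ as a residue at $\mu_{{\mathbf s}} = \delta_{B_n}^{(r+1)/(2r)}$ of the Eisenstein series attached to $\Ind_{B_n^{(r)}({\A})}^{GL_n^{(r)}({\A})}\chi_1 \cdot \mu_{{\mathbf s}}$. Residues of Borel Eisenstein series are never cuspidal, so the hypothesis that $\Theta_{n,\chi}^{(r)}$ is cuspidal forces $\chi \neq \chi_1^r$.

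For $n \mid r$ and the uniqueness of $n$, I would proceed by assuming a cuspidal $\pi = \Theta_{n,\chi}^{(r)}$ exists and using it as inducing data for Eisenstein series on a larger metaplectic cover. Since $\pi$ must be generic, the local components $\Theta_{n,\chi_\nu}^{(r)}$ must also be generic, which already gives $r\ge n$, as noted in the introduction. To sharpen this to $n \mid r$, I would consider the Eisenstein series on $GL_{kn}^{(r)}({\A})$ induced from $k$ copies of $\pi$ on the Levi $GL_n\times \cdots \times GL_n$, with the usual complex shifts. Computing its constant terms through intertwining operators, normalized by the appropriate metaplectic $L$-factors attached to $\pi$, one expects poles whose residues define theta representations on the larger cover. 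Matching unramified Satake parameters across the residue should identify $k = r/n$ as the distinguished value at which the residue is the (generic, non-cuspidal) theta $\Theta_{r,\chi'}^{(r)}$ on $GL_r^{(r)}({\A})$; the requirement that this matching be consistent at every unramified place forces $n\mid r$. For the uniqueness of $n$, if $n_1 \neq n_2$ both yielded cuspidal theta representations with the same $\chi$, then iterating the construction would give two different ways of producing a common residue on $GL_{N}^{(r)}({\A})$ for $N = \operatorname{lcm}(n_1,n_2) \cdot (\text{something dividing }r)$; comparing Whittaker coefficients of these two residues should yield a contradiction via the uniqueness of the Whittaker model.

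The hardest step will be the pole analysis of the Eisenstein series on the metaplectic cover: one needs the intertwining operators normalized by the correct metaplectic $L$-factors, the Banks--Levy--Sepanski block-compatible cocycle tracked carefully through every computation, and---crucially---non-vanishing (not merely the existence) of the relevant residues. A closely related difficulty is identifying the residue representation precisely with a twisted theta representation by matching Satake parameters at each unramified place, and handling the twist $\chi^{1/r}$ on the center of the covering torus. Supplementing the Eisenstein-series argument with a computation of Fourier coefficients of $\pi$ attached to non-regular unipotent orbits of $GL_n$ may well be needed to exclude the possibility that $n\nmid r$ but the putative residue vanishes for a different reason.
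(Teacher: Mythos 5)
Your argument for the third assertion is a non sequitur. Definition~\ref{def1} only requires a cuspidal theta representation to have unramified constituents equal to $\Theta_{n,\chi_\nu}^{(r)}$ at almost all places; it does not identify it with the residual representation of \cite{K-P}. When $\chi=\chi_1^r$ the Kazhdan--Patterson residue construction produces \emph{one} non-cuspidal automorphic representation with those local constituents, but on covering groups there is no rigidity or strong multiplicity one allowing you to conclude that a cuspidal representation nearly equivalent to it must coincide with it. So ``residues of Borel Eisenstein series are never cuspidal'' rules out only the residual realization, not the existence of some other cuspidal $\pi$ with the same local data --- which is exactly what has to be excluded. The paper's actual argument for this part is nontrivial: since $\chi=\chi_1^r$ one can build the non-cuspidal $\Theta_{2,\chi}^{(r)}$ as a residue, mix it with the putative cuspidal $\Theta_{n,\chi}^{(r)}$ in the inducing data of an Eisenstein series on $GL_{abn+2}^{(r)}({\A})$, show the residue has unipotent orbit $((an)^b2)$, and derive a contradiction from the vanishing of a constant term along $U_{abn,2}$ (no Weyl element conjugates it into $U^-_{n,\ldots,n,2}$), exactly parallel to the uniqueness argument.

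The divisibility and uniqueness plans also do not amount to proofs. For $n\mid r$, ``matching Satake parameters across the residue \ldots forces $n\mid r$'' is an expectation, not a derivation: no contradiction is actually extracted from the assumption $n\nmid r$. The paper's mechanism is different: taking $a$ minimal so that the residue $\mathcal{E}_{an,\chi}^{(r)}$ is generic and $b$ minimal with $abn>r$, it proves $\mathcal{O}(\mathcal{E}_{abn,\chi}^{(r)})=((an)^b)$ (using cuspidality of $\Theta_{n,\chi}^{(r)}$ through constant-term vanishing), and then observes that when $an\ne r$ the $((an)^b)$-Fourier coefficient is a \emph{genuine} function on a nontrivial cover of $GL_b$ (the determinant cover does not split precisely because $n\nmid r$), hence non-constant; a further expansion then yields a nonzero coefficient on the larger orbit $((an+1)(an)^{b-1}(an-1))$, contradicting the orbit computation. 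For uniqueness, your appeal to ``uniqueness of the Whittaker model'' fails on covers, where Whittaker functionals occur with high multiplicity, and in any case the relevant residual representations are not generic; the paper instead exploits the asymmetry of constant terms under the outer automorphism: the nonzero $((an)^bm)$-coefficient of $\mathcal{E}_{abn+m,\chi}^{(r)}$ is unfolded via two Weyl elements toward $U_{m,an,\ldots,an}$ and toward $U_{an,\ldots,an,m}$, and the second route reaches the constant term along $U_{abn,m}$, which vanishes since no Weyl element conjugates $U_{abn,m}$ into $U^-_{n,\ldots,n,m}$. The key missing idea in your proposal is thus the determination and exploitation of the unipotent orbit attached to these residual representations, rather than Satake matching or Whittaker uniqueness.
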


For $n=2$ this result follows from \cite{F}. In \cite{F}, a character $\chi$ such that $\chi\ne\chi_1^r$ is called an odd character for the number $r$.

To establish the Theorem, we need to prove three things. First that $n$ divides $r$. We prove this in Section~\ref{divide}. 
Second, the uniqueness property of the number $n$. We prove this in Section~\ref{unique}.
Then in Section~\ref{char} we prove the condition on the character $\chi$.   The basic tool in these sections is the 
study of Eisenstein series obtained by inducing copies of cuspidal theta 
representations, and their residues.  There is a unipotent orbit attached to the automorphic representation generated by these 
residues, and we determine this. 
However, if any of the conditions of the Theorem are violated then this leads to a contradiction.  For example, if there are
two cuspidal theta representations  ${\Theta}_{m,\chi}^{(r)}$ and 
${\Theta}_{n,\chi}^{(r)}$ attached to the same character $\chi$ with $m<n$, a suitable Eisenstein series obtained by mixing them
in the inducing data has a residue. The Fourier coefficient of this residue attached to its
unipotent orbit can be analyzed in two different ways, with contradictory vanishing properties.  The
contradictory properties are due to 
a lack of symmetry for this representation under the outer automorphism of the Dynkin diagram, which takes the relevant parabolic
used in making the induction to its
associated parabolic, in terms of the constant terms that it supports.

We will establish Theorem~\ref{th1} using residues of Eisenstein series, but parts of it follows easily if one accepts Conjecture~1.2 in Bump and Friedberg
\cite{B-F}. Indeed, suppose that for some character $\chi$, there is a cuspidal theta 
representation $\Theta_{r,\chi}^{(r)}$ defined on the group $GL_r^{(r)}({\A})$. Assume that for some $m<r$, one can define a theta representation $\Theta_{m,\chi}^{(r)}$ which need not be cuspidal, but corresponding to the same character $\chi$. Then, assuming Conjecture~1.2 in \cite{B-F}, the following identity follows from  \cite{B-F}, Proposition~2.1
\begin{equation}\label{ran1}
\int\limits_{GL_m(F)\backslash GL_m({\A})} \overline{\theta}_{r,\chi}^{(r)}\begin{pmatrix} g&\\ &I_{r-m}\end{pmatrix}\theta_{m,\chi}^{(r)}(g)\, |\text{det} g|^{s-\frac{r-m}{2}}\,dg= Z_S(\chi,s)\,L^S(rs- \frac{r-1}{2},\chi^{-1}\otimes\Theta_{m,\chi}^{(r)}).
\end{equation}
Here ${\theta}_{r,\chi}^{(r)}$ is a vector in the space of ${\Theta}_{r,\chi}^{(r)}$, and similarly for ${\theta}_{m,\chi}^{(r)}$. Also, $S$ is a set of places, including the archimedean places,  such that outside of $S$ all data is unramified. Finally, $L^S$ is the partial $L$-function interpreted as in \cite{B-F}, and $Z_S(\chi,s)$ is a product of local integrals defined on the places in $S$. 

Now from the definition of the partial $L$-function, it follows that this term
contributes a finite product of partial zeta functions to the right-hand-side of \ref{ran1}. Hence for suitable $s$ the term $L^S(rs- \frac{r-1}{2},\chi^{-1}\otimes\Theta_{m,\chi}^{(r)})$ has a simple pole.
Since the integrals involved in $Z_S(\chi,s)$ are all Whittaker type integrals, it is not hard to prove that given any complex number $s$, there is a choice of data such that $Z_S(\chi,s)$ is not zero at $s$. Hence, for suitable $s$ and suitable data,
the right hand side of \eqref{ran1} has a simple pole. But the left hand side of \eqref{ran1} is holomorphic for all $s$ since ${\Theta}_{r,\chi}^{(r)}$ is cuspidal. This is a contradiction, and hence for all $m<r$ the group $GL_m^{(r)}({\A})$ has no cuspidal theta representation associated with $\chi$. Moreover, if $\chi=\chi_1^r$ for some character $\chi_1$, then for any $m<r$, we can consider the theta representation ${\Theta}_{m,\chi}^{(r)}$ as constructed in \cite{K-P}. Since the left-hand side of \eqref{ran1} still represents a holomorphic function even if ${\Theta}_{m,\chi}^{(r)}$ is not cuspidal while the right-hand side has a pole, we once again derive a contradiction. Hence $\chi\ne\chi_1^r$.

We thank Erez Lapid for helpful conversations.

\section{Residues of Eisenstein Series}\label{residue}

Given $l$ natural numbers $n_1\ge n_2\ge\ldots\ge n_l>0$, let ${\Theta}_{n_i,\chi}^{(r)}$ denote theta representations attached to a fixed character $\chi$. Let $k=n_1+\cdots +n_l$, so $\lambda:=(n_1,n_2,\ldots ,n_l)$ is a partition of $k$.
Let $P_{n_1,\ldots,n_l}$ be the standard parabolic subgroup of $GL_k$ whose Levi part $M_{n_1,\ldots,n_l}$ is $GL_{n_1}\times\cdots\times GL_{n_l}$ embedded diagonally
\begin{equation*}
(g_1,g_2,\ldots,g_l)\mapsto \text{diag}(g_1,g_2,\ldots ,g_l)\quad :\quad g_j\in GL_{n_j},
\end{equation*} and let $U_{n_1,\ldots,n_l}$ denote the unipotent radical of $P_{n_1,\ldots,n_l}$.

Let ${\mathbf s}=(s_1,\dots,s_{l})$ be a multiple complex variable.  Then one may form an Eisenstein series $E_{\lambda,\chi}^{(r)}(g,{\mathbf s})$ 
on the group $GL_k^{(r)}({\A})$ attached to the representations 
$({\Theta}_{n_1,\chi}^{(r)},{\Theta}_{n_2,\chi}^{(r)},\cdots,{\Theta}_{n_l,\chi}^{(r)})$ by a variant of
standard parabolic induction. Once one has a representation of $M^{(r)}_{n_1,\ldots,n_l}(\A)$ the construction
is the standard `averaging' one (see, for example, M\oe glin-Waldspurger \cite{M-W}, II.1.5);
we frequently suppress the dependence of this series on the test vector used
in the averaging from the notation for the Eisenstein series.
However, since the inverse images of the groups
$GL_{n_i}(\A)$ in $M^{(r)}_{n_1,\ldots,n_l}(\A)$ do not commute,  one must restrict to a smaller subgroup and then
induce or extend from that.  Let 
$$GL_{n_j,0}(\A)=\{g\in GL_{n_j}(\A) \mid \det g \in (\A^\times)^r F^\times\}.$$  
Then the inverse images of these
groups in $M^{(r)}_{n_1,\ldots,n_l}(\A)$ commute,
and the group $S$ that they generate is thus isomorphic to the fibered direct product of the $GL^{(r)}_{n_j,0}(\A)$
over $\mu_r$.  Accordingly, one first restricts each representation 
$\Theta_{n_i,\chi}^{(r)}|\text{det}(\cdot)|^{s_i}$ to $GL_{n_i,0}^{(r)}(\A)$, and takes the usual tensor product to obtain a genuine representation of $S$. 
One may then proceed to extend this representation to $M^{(r)}_{n_1,\ldots,n_l}(\A)$,
either by extending functions by zero (as in Suzuki  \cite{S}, Section 8), by 
inducing
to $M^{(r)}_{n_1,\ldots,n_l}(\A)$ (as in Brubaker and Friedberg \cite{Br-Fr},
though that paper is written in the language of $S$-integers as a substitute for the adeles),  or by first extending to a larger subgroup of 
$M^{(r)}_{n_1,\ldots,n_l}(\A)$, under certain hypotheses, and then inducing
from that subgroup to $M^{(r)}_{n_1,\ldots,n_l}(\A)$ (as in Takeda \cite{Tak}).  Our main computations will take
place in the subgroup generated by $S$ and by unipotent subgroups, which split via the trivial section 
$u\mapsto (u,1)$, hence any of these (slightly different) foundations are sufficient for the arguments given here.
We will sometimes abuse the notation (as we already did in the case of induction from the Borel subgroup)
and describe  $E_{\lambda,\chi}^{(r)}(g,{\mathbf s})$ as the Eisenstein series attached to the 
induced representation
\begin{equation}\label{ind1}
\Ind_{P_{n_1,\ldots,n_l}^{(r)}(\A)}^{GL_k^{(r)}({\A})}({\Theta}_{n_1,\chi}^{(r)}|\text{det}(\cdot)|^{s_1}\otimes {\Theta}_{n_2,\chi}^{(r)}|\text{det}(\cdot)|^{s_2}\otimes\cdots\otimes {\Theta}_{n_l,\chi}^{(r)}|\text{det}(\cdot)|^{s_l}).
\end{equation}
As a representation, this induced space is the vector space spanned by the residues $E_{\lambda,\chi}^{(r)}(g,{\mathbf s})$
as one varies over all test vectors.

The Eisenstein series $E_{\lambda,\chi}^{(r)}(g,{\mathbf s})$
has a simple pole, similarly to the case $n_i=1$ for all $i$ which is described in Section~\ref{setup}.
Indeed, by Definition~\ref{def1}, the unramified constituent at a place $\nu$ of the representation ${\Theta}_{n_i,\chi}^{(r)}$ is a quotient of 
$\Ind_{B_{n_i}^{(r)}(F_\nu)}^{GL_{n_i}^{(r)}(F_\nu)}\chi_\nu^{\frac{1}{r}} \delta_{B_{n_i}}^{\frac{r+1}{2r}}$, where
$\chi=\prod_\nu \chi_\nu$. This means that the unramified constituent of the induced representation \eqref{ind1} is an induced representation of the form
$\Ind_{B_k^{(r)}(F_\nu)}^{GL_k^{(r)}(F_\nu)}\chi_\nu^{\frac{1}{r}} \mu_{{\mathbf s}}$ where $\mu_{{\mathbf s}}$ is a genuine character of the group $Z(T^{(r)}_k(F_\nu))$ defined as follows. Let $t=\text{diag}(A_1,\ldots, A_l)$ where each $A_i$ is a diagonal matrix in $GL_{n_i}(F_\nu)$ which consists of $r$-th powers. Let $\tilde{t}=(t,\zeta)\in Z(T^{(r)}_k(F_\nu))$. Then we define
$$\mu_{{\mathbf s}}(\tilde{t})=\zeta\,\delta_{B_{n_1}}^{\frac{r+1} {2r}}(A_1)\ldots \delta_{B_{n_l}}^{\frac{r+1} {2r}}(A_l)\prod_i |A_i|^{s_i}.$$
Arguing as in \cite{K-P}, one sees that the Eisenstein series $E_{\lambda,\chi}^{(r)}(g,{\mathbf s})$ has a simple pole at the point $\mu_{{\mathbf s}}= \delta_{B_k}^{\frac{r+1} {2r}}$.

We remark that the existence of this pole does not depend on whether some of the representations ${\Theta}_{n_i,\chi}^{(r)}$ are cuspidal or not. 
A similar construction with all of the representations being cuspidal was studied by
Suzuki \cite{S}, Sections 8 and 9. In that reference, the author also assumes that the Shimura lifts of the 
cuspidal representations in question are also cuspidal. In our case this does not happen, but the 
argument about the existence of the pole is the same.

Let ${\mathcal L}_{k,\lambda,\chi}^{(r)}$ denote the residue representation of the above Eisenstein series at the above point. 
Then the construction of the representation 
${\mathcal L}_{k,\lambda,\chi}^{(r)}$ is inductive in the following
sense.  For $1\leq j\leq l$ let $\lambda_j$ be a partition of $n_j$.
Form the representations ${\mathcal L}_{n_j,\lambda_j,\chi}^{(r)}$.
Then 
we can form the Eisenstein series attached to the representations
$(\eta_{{\mathbf s}}{\mathcal L}_{n_1,\lambda_1,\chi}^{(r)}, \eta_{{\mathbf s}}{\mathcal L}_{n_2,\lambda_2,\chi}^{(r)},\cdots, 
\eta_{{\mathbf s}}{\mathcal L}_{n_l,\lambda_l,\chi}^{(r)})
$
where $\eta_{{\mathbf s}}$ is an unramified character of $P_{n_1,\ldots,n_l}$. We shall denote this Eisenstein
series by $E_{\lambda_1,\ldots,\lambda_l,\chi}^{(r)}(g,{\mathbf s})$. 
As in the above, and also as in Section~\ref{setup} we deduce that this Eisenstein series has a simple pole at $\eta_{{\mathbf s}}=
\delta_{P_{n_1,\ldots,n_l}}^{\frac{r+1}{2r}}$, and the representation
generated by the residues is ${\mathcal L}_{k,\lambda,\chi}^{(r)}$.

\section{The Divisibility Condition}\label{divide}

Suppose that ${\Theta}_{n,\chi}^{(r)}$ is a cuspidal theta representation defined on $GL_n^{(r)}({\A})$
and that $n$ does {\sl not}
divide $r$. We shall derive a contradiction.  First, we construct the residue representation
${\mathcal L}_{nl,\lambda,\chi}^{(r)}$ on $GL_{nl}^{(r)}({\A})$ where $l$ is a natural
number and $\lambda=(n^l)$.  For convenience we sometimes omit $\lambda$ from the notation,
writing ${\mathcal L}_{nl,\chi}^{(r)}$ instead of ${\mathcal L}_{nl,\lambda,\chi}^{(r)}$.  Thus
${\mathcal L}_{n,\chi}^{(r)}={\Theta}_{n,\chi}^{(r)}$. 

In general, if $\varphi$ is an automorphic function on a group $H({\A})$ and $U$ is any 
unipotent subgroup of $H$, we write $\varphi^U$ for the constant term of $\varphi$ along $U$ 
\begin{equation}\label{uni1}
\varphi^U(h)=\int\limits_{U(F)\backslash U({\A})}\varphi(uh)\,du.\notag
\end{equation}
Also, if $\psi_U$ is a character of $U(F)\backslash U({\A})$, we write
\begin{equation}\label{uni11}
\varphi^{U,\psi_U}(h)=\int\limits_{U(F)\backslash U({\A})}\varphi(uh)\,\psi_U(u)\,du.\notag
\end{equation} 
We shall be concerned with the case that 
$U=U_{(l-m)n,mn}$, the unipotent radical of the maximal parabolic subgroup of $GL_{nl}$ whose Levi part is $GL_{(l-m)n}\times GL_{mn}$,
with $1\le m<l$.

We start with the following 
\begin{proposition}\label{prop1}
Fix $m$, $1\le m<l$,  let
$P=P_{(l-m)n,mn}$ and $U=U_{(l-m)n,mn}$.

(i)~
Let $\varphi_{nl,\chi}^{(r)}$ be a function in the space of ${\mathcal L}_{nl,\chi}^{(r)}$.  Then there are functions $\varphi_{(l-m)n,\chi}^{(r)}$ in
the space of ${\mathcal L}_{(l-m)n,\chi}^{(r)}$ and $\varphi_{mn,\chi}^{(r)}$ 
in the space of ${\mathcal L}_{mn,\chi}^{(r)}$ such that
\begin{equation}\label{type11}
{(\varphi_{nl,\chi}^{(r)}})^U(t(v_1,v_2))=\delta_{P}^{\frac{r-1}{2r}}(t)
\varphi_{(l-m)n,\chi}^{(r)}(v_1)\,\varphi_{mn,\chi}^{(r)}(v_2)
\end{equation}
for all unipotent elements $v_1\in GL_{(l-m)n}(\A)$ and 
$v_2\in GL_{mn}(\A)$ and all $t$ which are $r$-th powers and in the center of the Levi subgroup of $P$. 

(ii)~
For $i=1,2$, let $\psi_{V_i}$ be characters of $V_i(F)\backslash
V_i({\A})$. Then the integral 
\begin{equation*} 
\int\limits_{V_1(F)\backslash V_1({\A})}\int\limits_{V_2(F)\backslash
V_2({\A})}{(\varphi_{nl,\chi}^{(r)}})^U((v_1,v_2))\,\psi_{V_1}(v_1)\,\psi_{V_2}(v_2)\,dv_1\,dv_2
\end{equation*}
is zero for all $\varphi_{nl,\chi}^{(r)}$ in the space of ${\mathcal L}_{nl,\chi}^{(r)}$
if  $(\varphi_{(l-m)n,\chi}^{(r)})^{V_1,\psi_{V_1}}$ or 
$(\varphi_{mn,\chi}^{(r)})^{V_2,\psi_{V_2}}$ is zero for all functions
$\varphi_{(l-m)n,\chi}^{(r)}$ in
the space of ${\mathcal L}_{(l-m)n,\chi}^{(r)}$ or all $\varphi_{mn,\chi}^{(r)}$ 
in the space of ${\mathcal L}_{mn,\chi}^{(r)}$.
\end{proposition}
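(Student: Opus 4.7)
The plan is to realize $\varphi_{nl,\chi}^{(r)}$ as the residue at $\mu_{\mathbf{s}}=\delta_{B_{nl}}^{\frac{r+1}{2r}}$ of the Eisenstein series $E_{(n^l),\chi}^{(r)}(g,\mathbf{s})$ attached to $l$ copies of $\Theta_{n,\chi}^{(r)}$ induced from $P_{(n^l)}$, and then to compute its constant term along $U=U_{(l-m)n,mn}$ via the standard constant-term formula. Since integration over the compact quotient $U(F)\backslash U(\A)$ commutes with the residue operation in $\mathbf{s}$, one may compute the constant term of the Eisenstein series first and take the residue afterward. The constant-term formula expresses the result as a finite sum indexed by representatives of $W_M\backslash W_{GL_{nl}}/W_{P_{(n^l)}}$, where $M=GL_{(l-m)n}\times GL_{mn}$ is the Levi of $P$, with each double coset representative $w$ contributing an Eisenstein series on $M^{(r)}(\A)$ attached to the $w$-twist of the inducing data.

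At the residue point, by an analysis analogous to \cite{K-P} and to the discussion at the end of Section~\ref{residue}, only the identity double coset contributes a pole of maximal order; the other Weyl representatives, which mix blocks between the first $l-m$ copies of $GL_n$ and the last $m$ copies, yield intertwining operators whose residues vanish there. The surviving term is the Eisenstein series $E_{(n^{l-m}),(n^{m}),\chi}^{(r)}$ of Section~\ref{residue} on the block-diagonal subgroup of $M^{(r)}(\A)$, and its residue at the appropriate point is, by the inductive construction recorded there, a vector in $\mathcal{L}_{(l-m)n,\chi}^{(r)}\boxtimes \mathcal{L}_{mn,\chi}^{(r)}$. Writing that vector as a finite sum of pure tensors produces the desired $\varphi_{(l-m)n,\chi}^{(r)}$ and $\varphi_{mn,\chi}^{(r)}$. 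The factor $\delta_P^{\frac{r-1}{2r}}$ arises from comparing the normalization of the large Eisenstein series with those of the two smaller ones, and the restriction of $t$ to $r$-th powers in $Z(M)$ ensures that the metaplectic cocycle is trivial at $t$, so no extra factor from the cover appears.

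Part~(ii) is then immediate from part~(i) by Fubini. Taking $t=1$ factors the double integral as a finite sum of products of the Fourier integral of $\varphi_{(l-m)n,\chi}^{(r)}$ against $\psi_{V_1}$ with the Fourier integral of $\varphi_{mn,\chi}^{(r)}$ against $\psi_{V_2}$; the hypothesis forces one factor in each summand to vanish identically, so the full integral vanishes. The main technical obstacle is the residue analysis in part~(i), namely verifying that only the identity double coset in $W_M\backslash W_{GL_{nl}}/W_{P_{(n^l)}}$ survives at $\mu_{\mathbf{s}}=\delta_{B_{nl}}^{\frac{r+1}{2r}}$. This requires careful bookkeeping of the poles and zeros of the Gindikin--Karpelevich-type factors attached to the metaplectic intertwining operators for blocks of size $n$, generalizing the $n=1$ computation of Kazhdan--Patterson.
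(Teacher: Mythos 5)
Your overall strategy (compute the constant term of a residual Eisenstein series and isolate the polar term) is in the spirit of the paper, but the key identification in part (i) is wrong. In your expansion of the constant term of $E_{(n^l),\chi}^{(r)}(g,\mathbf{s})$ along $U=U_{(l-m)n,mn}$, the identity double coset contributes the term with \emph{no} intertwining factor, namely a product of the two smaller Eisenstein series on $GL_{(l-m)n}^{(r)}$ and $GL_{mn}^{(r)}$ built from the unpermuted blocks. That term is holomorphic in the linking variable $s_{l-m}-s_{l-m+1}$ near the point $\mu_{\mathbf{s}}=\delta_{B_{nl}}^{\frac{r+1}{2r}}$: its only singularities come from the Levi Eisenstein series, giving a pole of order at most $(l-m-1)+(m-1)=l-2$, whereas the iterated residue defining ${\mathcal L}_{nl,\chi}^{(r)}$ requires order $l-1$. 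Hence the identity-coset term is annihilated by the residue, and it is precisely the nontrivial Weyl elements --- that is, the intertwining operators --- that supply the missing pole and survive. This is not a cosmetic slip: the exponent $\delta_P^{\frac{r-1}{2r}}$ in \eqref{type11} is exactly the signature of the intertwining (long-element) term, equivalently of the fact that ${\mathcal L}_{nl,\chi}^{(r)}$ sits as a subrepresentation of the induced representation at the reflected point $s=\frac{r-1}{2r}$; the mechanism you describe would instead produce the exponent $\delta_P^{\frac{r+1}{2r}}$ (before vanishing), contradicting the statement and invalidating the square-integrability argument of Lemma~\ref{lem0}, which needs the negative exponent $\delta_P^{-\frac{1}{2r}}\delta_B^{\frac12}$.

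For comparison, the paper avoids the multi-variable bookkeeping you would now be forced into (tracking which Weyl elements give total pole order $l-1$ through Gindikin--Karpelevich factors). Using the inductive construction at the end of Section~\ref{residue}, it realizes ${\mathcal L}_{nl,\chi}^{(r)}$ as the residue at $s=\frac{r+1}{2r}$ of the one-variable Eisenstein series attached to $\Ind_{P_{mn,(l-m)n}^{(r)}(\A)}^{GL_{nl}^{(r)}(\A)}({\mathcal L}_{mn,\chi}^{(r)}\otimes{\mathcal L}_{(l-m)n,\chi}^{(r)})\delta_{P_{mn,(l-m)n}}^{s}$, induced from the \emph{associate} maximal parabolic with the blocks in the opposite order. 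The unfolding then runs over the few double cosets $P_{mn,(l-m)n}(F)\backslash GL_{nl}(F)/P_{(l-m)n,mn}(F)$; all terms are holomorphic at $s=\frac{r+1}{2r}$ except the one attached to $w_0=\begin{pmatrix}&I_{mn}\\ I_{(l-m)n}&\end{pmatrix}$, whose contribution is the intertwining operator $M_{w_0,s}$ with a simple pole there, and whose image (which swaps the blocks back into the order $(l-m)n,\,mn$) gives \eqref{type11} with the stated $t$-dependence. If you want to keep your route via $P_{(n^l)}$, you must redo the analysis so that the surviving terms are these intertwining contributions, not the identity coset. Your deduction of part (ii) from part (i) is fine as far as it goes.
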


When $r=1$ and ${\Theta}_{n,\chi}^{(r)}$ is a
cuspidal representation of $GL_n({\A})$, a similar statement is given in Offen-Sayag \cite{O-S}, Lemma~2.4
and Jiang-Liu \cite{J-L}, Lemma~4.2.

\begin{proof}
The proof is based on a standard argument using unfolding of the
Eisenstein series, and closely follows \cite{O-S}, \cite{J-L}, and \cite{M-W} II.1.7. We sketch it briefly. 
Let $E_{nl,\chi}^{(r)}(g,s)$ denote the Eisenstein series attached to the induced representation
$$\Ind_{P_{mn,(l-m)n}^{(r)}({\A})}^{GL_{nl}^{(r)}({\A})}({\mathcal L}_{mn,\chi}^{(r)}
\otimes {\mathcal L}_{(l-m)n,\chi}^{(r)})\delta_{P_{mn,(l-m)n}}^s.$$ 
Then, as explained in Section~\ref{residue} above, ${\mathcal L}_{n,\chi}^{(r)}$ is the residue of 
this Eisenstein series at $s=\frac{r+1}{2r}$. Consider the constant term
$E_{nl,\chi}^{(r),U}(g,s)$ for $\text{Re}(s)$ large. Unfolding this constant term  as
in \cite{B-F-G, J-L,  M-W, O-S}, we obtain a sum of Eisenstein series (and degenerate Eisenstein series), where the sum is over 
Weyl elements that give a complete set of representatives for the double cosets
$P_{mn,(l-m)n}(F)\backslash GL_{nl}(F)/P_{(l-m)n,mn}(F)$. 
(See for example Bump-Friedberg-Ginzburg \cite{B-F-G}, Eq.\ (1.2).) Let
$$w_0=\begin{pmatrix} &I_{mn}\\ I_{(l-m)n}&\end{pmatrix}.$$ 
Then as in the references above, for every Weyl element not equal to $w_0$  which contributes
a nonzero term, the corresponding Eisenstein series is holomorphic at 
$s=\frac{r+1}{2r}$. 
The contribution from $w_0$ is just the intertwining operator $M_{w_0,s}$ which clearly has a
simple pole at $s=\frac{r+1}{2r}$, and as a function of $(v_1,v_2)$ is as in \eqref{type11}.

The claim about the dependence of ${(\varphi_{nl,\chi}^{(r)}})^U(t(v_1,v_2))$ on
$t$ follows since ${\mathcal L}_{n,\chi}^{(r)}$ is a sub-representation
of the induced representation
$$\Ind_{P_{mn,(l-m)n}^{(r)}({\A})}^{GL_{nl}^{(r)}({\A})}({\mathcal L}_{mn,\chi}^{(r)}
\otimes {\mathcal L}_{(l-m)n,\chi}^{(r)})\delta_{P_{mn,(l-m)n}}^s$$
at the point $s=\frac{r-1}{2r}$.
\end{proof}

We next give an application of Proposition~\ref{prop1}.
\begin{lemma}\label{lem0}
The representation ${\mathcal L}_{nl,\chi}^{(r)}$ is square integrable.
\end{lemma}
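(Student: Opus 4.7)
The plan is to invoke Langlands' (Jacquet's) square integrability criterion: a residual automorphic representation $\pi$ of $GL_{nl}^{(r)}(\A)$ is square integrable if and only if, for every standard maximal parabolic $P=MU$ of $GL_{nl}$, every exponent $\chi_\pi$ appearing in the constant term $\pi^U$ satisfies $\mathrm{Re}(\chi_\pi)-\rho_P$ strictly inside the negative Weyl chamber of $A_M$. Concretely, writing the central exponent on the one-dimensional split component of $Z(M_P)/Z(GL_{nl})$ as $\delta_P^{s}$, the criterion asks that $\mathrm{Re}(s)<1/2$ strictly. For a residue of a cuspidal induction this reduces to checking one such exponent per maximal parabolic.

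First I would apply Proposition~\ref{prop1}(i): for $P=P_{(l-m)n,mn}$ with $1\le m<l$, the constant term of $\varphi_{nl,\chi}^{(r)}$ along $U=U_{(l-m)n,mn}$ transforms on the $r$-th powers in $Z(M_P)$ by $\delta_P^{(r-1)/(2r)}$, tensored against functions in the spaces of ${\mathcal L}_{(l-m)n,\chi}^{(r)}$ and ${\mathcal L}_{mn,\chi}^{(r)}$. Since
$$\tfrac{r-1}{2r}=\tfrac12-\tfrac{1}{2r}<\tfrac12$$
strictly for every $r\ge 2$, the criterion is verified at these maximal parabolics. I would then complete the argument by induction on $l$. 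The base case $l=1$ is immediate: ${\mathcal L}_{n,\chi}^{(r)}={\Theta}_{n,\chi}^{(r)}$ is cuspidal, hence square integrable. For the inductive step, any standard parabolic $P'$ contained in some $P_{(l-m)n,mn}$ yields a constant term which, by transitivity of the constant term functor together with Proposition~\ref{prop1}(i), factors as $\delta_P^{(r-1)/(2r)}$ times deeper constant terms of the inductively square-integrable ${\mathcal L}_{(l-m)n,\chi}^{(r)}$ and ${\mathcal L}_{mn,\chi}^{(r)}$; combined with the strict inequality above this supplies the criterion on $P'$.

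The main obstacle is the treatment of the maximal parabolics $P_{a,nl-a}$ for which $a$ is not a multiple of $n$, since Proposition~\ref{prop1} does not directly apply. Morally these contribute no new exponents because the cuspidal support of ${\mathcal L}_{nl,\chi}^{(r)}$ is concentrated on the $l$-fold product of twists of $\Theta_{n,\chi}^{(r)}$, but a rigorous justification requires unfolding the defining Eisenstein series $E_{nl,\chi}^{(r)}(g,\mathbf{s})$ along $U_{a,nl-a}$ and tracking the contribution of every double-coset representative at the point $\mu_{\mathbf{s}}=\delta_{B_{nl}}^{(r+1)/(2r)}$ where ${\mathcal L}_{nl,\chi}^{(r)}$ is extracted, in the same spirit as the proof of Proposition~\ref{prop1}. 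In each such case one expects either vanishing of the Jacquet module or an exponent that is a Weyl translate of $\delta_B^{(r+1)/(2r)}$ whose projection to $Z(M_P)/Z(GL_{nl})$ again satisfies the strict negativity condition.
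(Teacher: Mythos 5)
Your strategy is the same as the paper's: Jacquet's criterion (\cite{M-W}, I.4.11) together with the exponent $\delta_P^{\frac{r-1}{2r}}=\delta_P^{-\frac{1}{2r}}\delta_B^{\frac12}$ supplied by Proposition~\ref{prop1}(i) at the parabolics $P_{(l-m)n,mn}$, and the strict inequality $\frac{r-1}{2r}<\frac12$ is exactly the paper's point. However, the step you flag as ``the main obstacle'' --- the maximal parabolics $P_{a,nl-a}$ with $n\nmid a$ --- is left as an expectation rather than an argument, and this is precisely where the remaining content of the paper's proof lies. The resolution is not an exponent estimate but outright vanishing: since the inducing data on each block is the \emph{cuspidal} representation $\Theta_{n,\chi}^{(r)}$, the standard unfolding of the constant term of $E_{nl,\chi}^{(r)}$ along $U$ shows that it is identically zero unless some Weyl conjugate $wUw^{-1}$ is contained in $U_{n,\ldots,n}^-$ (this is the analogue of Lemma~\ref{lemuni1}); and for $U=U_{a,nl-a}$ such a $w$ forces every size-$n$ block of the Levi $GL_n\times\cdots\times GL_n$ to be moved entirely to one side of the flag, hence $n\mid a$. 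So when $n\nmid a$ there are no exponents at all to examine. Your alternative scenario --- a nonzero constant term whose exponent is a Weyl translate of $\delta_B^{\frac{r+1}{2r}}$ that one ``expects'' to satisfy the negativity condition --- does not occur here, and it could not simply be assumed: if such a term survived, its sign would have to be checked, and that check is exactly what cannot be waved away (indeed this is where cuspidality of $\Theta_{n,\chi}^{(r)}$ enters the lemma). Inserting the vanishing statement closes the gap and makes your proof coincide with the paper's.

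One further remark: your induction on $l$ over non-maximal parabolics is harmless but not needed in the paper's formulation; the criterion is verified directly from the constant terms along maximal parabolics (vanishing off the $n$-divisible ones, the exponent $\delta_P^{-\frac{1}{2r}}\delta_B^{\frac12}$ on them), with the cuspidal support along $P_{n,\ldots,n}$ controlled by iterating Proposition~\ref{prop1}(i).
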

\begin{proof}
We use Jacquet's criterion as stated in \cite{M-W}, the Lemma in I.4.11.   Note that ${\mathcal L}_{nl,\chi}^{(r)}$
consists of automorphic forms so the Lemma there is applicable.  Let $U$ denote a 
unipotent radical of a maximal parabolic subgroup $P$ of $GL_{nl}$. Let $U_{n,\ldots, n}^-=\widetilde{w}U_{n,\ldots, n}\widetilde{w}^{-1}$. Here $\widetilde{w}$ is the longest Weyl element in $GL_{nl}$. 

Suppose first that $U$ is such that there is no Weyl element $w$ of $GL_k$ such that 
$wUw^{-1}\subset U_{n,\ldots, n}^-$. Then a standard unfolding argument implies that the 
constant term
$$\int\limits_{U(F)\backslash U({\A})}E_{nl,\chi}^{(r)}(ug,{\mathbf s})\,du$$ 
is zero for all choices of data. 

On the other hand, if $U=U_{(l-m)n,mn}$ for some $m$, then it follows from Proposition~\ref{prop1}, part  (i), that for all $t$ in the center of $P=P_{(l-m)n,mn}$ we obtain the exponent 
$\delta_P^{\frac{r-1}{2r}}=\delta_P^{\frac{-1}{2r}}\delta_B^{\frac{1}{2}}$. Here $B$ is the Borel
subgroup of $GL_{ln}$. Lemma~\ref{lem0} follows. 
\end{proof}

The above Proposition and Lemma can be extended to the general case. That is, both
statements holds for the representation ${\mathcal L}_{k,\lambda,\chi}^{(r)}$ as well.

Since we are in the case $r\geq n$, the representation ${\mathcal L}_{n,\chi}^{(r)}={\Theta}_{n,\chi}^{(r)}$ is clearly generic. 
On the other hand 
if $l$ is chosen so that $nl>r$, then ${\mathcal L}_{nl,\chi}^{(r)}$ is not generic. Hence, there
is a minimal natural number, which we denote by $a$, such that ${\mathcal L}_{an,\chi}^{(r)}$ is generic, but ${\mathcal L}_{(a+1)n,\chi}^{(r)}$ is not. Notice that since $n$ does not divide $r$
then $an<r$. Let $b$ be the smallest natural number
so that $abn>r$. 

For the proof of the next Proposition we need to modify
our construction. Let ${\mathcal E}_{an,\chi}^{(r)}$ denote an irreducible generic summand of 
the representation ${\mathcal L}_{an,\chi}^{(r)}$. The existence of such a summand follows from
Lemma~\ref{lem0} and from the assumption that ${\mathcal L}_{an,\chi}^{(r)}$ is generic. 
Forming the Eisenstein series on $GL_{nl}^{(r)}({\A})$ attached to 
$({\mathcal E}_{an,\chi}^{(r)},{\mathcal E}_{an,\chi}^{(r)},\cdots,{\mathcal E}_{an,\chi}^{(r)})\eta_{{\mathbf s}}$
then it follows as in the previous section this series has a simple
pole at the point $\eta_{{\mathbf s}}=\delta_{P_{n,\ldots,n}}^{\frac{r+1}{2r}}$. Denote the residue
representation by ${\mathcal E}_{abn,\chi}^{(r)}$. It is clear from this construction that 
Proposition~\ref{prop1} holds if we replace the representation ${\mathcal L}_{ln,\chi}^{(r)}$
with the representation ${\mathcal E}_{ln,\chi}^{(r)}$.

Given an automorphic representation $\pi$ defined on a reductive group $H({\A})$, let ${\mathcal O}(\pi)$
be its set of unipotent orbits as defined in Ginzburg  \cite{G1}. (For information about
unipotent orbits see Collingwood and McGovern \cite{C-M}.) A unipotent orbit 
${\mathcal O}$ is in the set ${\mathcal O}(\pi)$ if first, $\pi$ has no nonzero Fourier coefficients 
attached to any unipotent orbit which is greater than ${\mathcal O}$ or not comparable with $\mathcal O$ and second, $\pi$ has a nonzero
Fourier coefficient corresponding to the unipotent orbit ${\mathcal O}$. Since unipotent groups split in any
covering group, this definition 
extends without change to representations of metaplectic groups.  Moreover, for the general linear group the unipotent orbits
are parametrized by partitions, a manifestation of the Jordan decomposition.
In our case, we have:
\begin{proposition}\label{propuni1}
We have ${\mathcal O}({\mathcal E}_{abn,\chi}^{(r)})=((an)^b)$.
\end{proposition}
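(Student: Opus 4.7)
The claim has two parts: (a) that $\mathcal{E}_{abn,\chi}^{(r)}$ supports a nonzero Fourier coefficient attached to the partition $(an)^b$ of $abn$; and (b) that every Fourier coefficient attached to a partition of $abn$ that strictly dominates $(an)^b$ or is incomparable with it vanishes on $\mathcal{E}_{abn,\chi}^{(r)}$. My plan is to reduce both assertions to properties of $\mathcal{E}_{an,\chi}^{(r)}$, exploiting the inductive construction of $\mathcal{E}_{abn,\chi}^{(r)}$ as a residue of an Eisenstein series on $P_{an,\ldots,an}$ (with $b$ parts) whose inducing data is $\mathcal{E}_{an,\chi}^{(r)}$ in each block, together with the $\mathcal{E}$-analogue of Proposition~\ref{prop1}.

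For (a), I would realize the Fourier coefficient of type $(an)^b$ using the conjugate partition $(b^{an})$: take $V$ to be the unipotent radical of $P_{b,b,\ldots,b}$ (with $an$ blocks) inside $GL_{abn}$ and let $\psi_V$ be the character $\psi_V(v)=\psi\bigl(\sum_{i=1}^{an-1}\mathrm{tr}(v_{i,i+1})\bigr)$, where $v_{i,i+1}$ is the $(i,i+1)$ block of $v$. A root-exchange argument in the style of \cite{J-L} and \cite{B-F-G} rewrites this Fourier integral so that, after reordering integrations, one first takes the constant term along $U_{an,\ldots,an}$ (the unipotent radical of $P_{an,\ldots,an}$ with $b$ parts) and then integrates against the standard Whittaker character on each $GL_{an}$ factor of the Levi. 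By Proposition~\ref{prop1} applied iteratively in its $\mathcal{E}$-version, the constant term factors as a product of $b$ vectors in the space of $\mathcal{E}_{an,\chi}^{(r)}$, so the full expression becomes a product of $b$ Whittaker coefficients of $\mathcal{E}_{an,\chi}^{(r)}$. These are nonzero for a suitable choice of data by the genericity of $\mathcal{E}_{an,\chi}^{(r)}$.

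For (b), the governing principle is that in type $A$ the largest unipotent orbit supported by a constituent of $\Ind_{P_{an,\ldots,an}}(\mathcal{E}_{an,\chi}^{(r)}\otimes\cdots\otimes\mathcal{E}_{an,\chi}^{(r)})$ is bounded by $(an)^b$ (the row-wise merge of the generic orbits on the blocks). Concretely, given a partition $\lambda$ of $abn$ not dominated by $(an)^b$, I would unfold the $\lambda$-Fourier coefficient on $\mathcal{E}_{abn,\chi}^{(r)}$ by reordering the integration so that the constant term along $U_{an,\ldots,an}$ appears on the inside, carrying out root exchanges as needed. Proposition~\ref{prop1} then reduces this to an integral in which at least one $\mathcal{E}_{an,\chi}^{(r)}$ factor is paired with a character realizing a partition of $an$ that strictly dominates $(an)$. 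Since no such partition of $an$ exists, the coefficient vanishes.

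The main obstacle is the combinatorial bookkeeping required for (b): for an arbitrary $\lambda$ incomparable with or strictly dominating $(an)^b$, one must select the correct sequence of root exchanges and constant terms so that Proposition~\ref{prop1} reduces cleanly to an impossible Fourier coefficient on some $\mathcal{E}_{an,\chi}^{(r)}$ factor. This is standard in spirit, following the templates developed in \cite{G1} and \cite{J-L}, but requires a careful case analysis matching the shape of $\lambda$ to an appropriate parabolic.
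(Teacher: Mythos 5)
Your part (a) is essentially the paper's own nonvanishing argument: the paper realizes the coefficient attached to $((an)^b)$ on the group $V_{b,an}$ (the unipotent radical of $P_{b,\ldots,b}$ with the trace character), relates it, as in \cite{G1}, p.~338, to an integral in which the Whittaker characters on the diagonal $GL_{an}$-blocks appear, and then applies the constant-term identity \eqref{type11} inductively together with the irreducibility and genericity of ${\mathcal E}_{an,\chi}^{(r)}$. That half of your plan is fine.

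Part (b) contains a genuine gap. The ``governing principle'' you invoke --- that every constituent of $\Ind({\mathcal E}_{an,\chi}^{(r)}\otimes\cdots\otimes{\mathcal E}_{an,\chi}^{(r)})$ has unipotent orbit bounded by $((an)^b)$ --- is false in general: parabolic induction from generic data typically produces generic constituents (already for $r=1$ the induced representation is generic in generic position), so the bound $((an)^b)$ is a special property of the \emph{residue}, i.e.\ it is precisely the statement to be proved, and appealing to it is circular. Your proposed mechanism is also not what the unfolding yields: for a partition such as $((an+1)(an)^{b-1}(an-1))$, reordering the integration and applying Proposition~\ref{prop1} does not leave you with a coefficient on a single ${\mathcal E}_{an,\chi}^{(r)}$ factor attached to a partition of $an$ dominating $(an)$; the surviving degenerate characters couple the blocks across the parabolic structure, and ``no partition of $an$ strictly dominates $(an)$'' has nothing to act on.

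The paper's vanishing argument runs differently and uses inputs your sketch never touches. First, as in \cite{G1}, Proposition~5.3, it suffices to kill the coefficients attached to the hook orbits $(m1^{abn-m})$ with $m>an$. After a Weyl conjugation and Fourier expansion these become coefficients on the full maximal unipotent with characters $\psi_{U_m,\underline\alpha}$, and the case analysis over $\underline\alpha$ rests on three facts: (i) the Whittaker coefficient of ${\mathcal E}_{abn,\chi}^{(r)}$ vanishes (non-genericity); (ii) the \emph{cuspidality} of $\Theta_{n,\chi}^{(r)}$ forces the constant terms of the Eisenstein series to vanish along every unipotent radical not of the form $U_{(ab-l)n,ln}$ --- the paper explicitly flags this as the point where cuspidality is used, and your argument for (b) nowhere uses cuspidality; (iii) for the remaining terms, Proposition~\ref{prop1} reduces to the genericity of ${\mathcal E}_{np,\chi}^{(r)}$ with $np>an$, which fails by the \emph{minimality in the definition of} $a$, not by any combinatorial impossibility inside a single $GL_{an}$ block. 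Without (ii) and (iii) the expansion does not collapse, so as written your part (b) does not go through.
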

\begin{proof}
This proof is similar to Jiang and Liu \cite{J-L}; see also the proof of Proposition~5.3 in Ginzburg \cite{G1}. 

We need to prove two things. First, let ${\mathcal O}=(n_1n_2\ldots n_r)$  be a partition of $abn$. 
Assume that this partition is greater than or is not related to 
the partition $((an)^b)$. Then we need to prove
that any Fourier coefficient of ${\mathcal O}({\mathcal E}_{abn,\chi}^{(r)})$ associated with this partition is zero. As explained in \cite{G1} at the beginning of the proof of Proposition~5.3,
it is enough to prove that ${\mathcal O}({\mathcal E}_{abn,\chi}^{(r)})$ has no nonzero
Fourier coefficients associated with the partitions $(m1^{abn-m})$ for all $m>an$. 

The proof of this statement about the Fourier coefficients is similar to \cite{G1, J-L}. In \cite{G1}
this was proved by local means, and this was replaced in \cite{J-L} by a version of Proposition~\ref{prop1}.  
To indicate the approach, suppose that $m$ is even. The case that $m$ is odd is similar and
will be omitted. If $m$ is even, then the Fourier coefficient associated with the unipotent
orbit  $(m1^{abn-m})$ is given as follows. Let $P_m$ denote the parabolic subgroup of $GL_{abn}$
whose Levi part is $GL_1^m\times GL_{abn-m}$. We embed the Levi part in $GL_{abn}$ as all matrices
of the form $\text{diag}(a_1,\ldots,a_{m/2},h,b_1,\ldots,b_{m/2})$, with $a_i, b_i\in GL_1$, 
$h\in GL_{abn-m}$. Let $V_m^0$ denote the unipotent radical of $P_m$, and $V_m$ denote the subgroup
of $V_m^0$ which consists of all matrices $v=(v_{i,j})$ such that $v_{i,abn-\frac{m}{2}+1}=0$ for
all $\frac{m}{2}+1\le i\le abn-\frac{m}{2}$. Let $\psi_{V_m}$ denote the character of $V_m$ defined
as follows. For $v=(v_{i,j})\in V_m$ set
$$\psi_{V_m}(v)=\psi\Big(v_{\frac{m}{2},abn-\frac{m}{2}+1}+
\sum_{i=1}^{m/2 -1}(v_{i,i+1}+v_{abn-\frac{m}{2}+i,abn-\frac{m}{2}+i+1})\Big).$$ 
Then, the 
Fourier coefficient corresponding to the partition $(m1^{abn-m})$ is given by 
$$\int\limits_{V_m(F)\backslash V_m({\A})}E_{abn,\chi}^{(r)}(v)\,\psi_{V_m}(v)\,dv.$$

Let $w_1$ denote the Weyl element of $GL_{abn}$ defined by
$$w_1=\begin{pmatrix} I_{\frac{m}{2}}&&\\ &&I_{\frac{m}{2}}\\ &I_{abn-m}&\end{pmatrix}.$$ 
Conjugating
by $w_1$ and performing some Fourier expansions, one deduces that the vanishing of the
above integral for all choices of data is equivalent to the vanishing of 
\begin{equation}\label{zero1}
\int\limits_{U_m(F)\backslash U_m({\A})}E_{abn,\chi}^{(r)}(u)\,\psi_{U_m}(u)\,du
\end{equation}
for all choices of data. Here $U_m$ is the unipotent radical of the standard parabolic subgroup 
of $GL_{abn}$ whose Levi part is $GL_1^{m-1}\times GL_{abn-m+1}$, with the Levi part embedded
in $GL_{abn}$ as  $\text{diag}(a_1,\ldots, a_{m-1},h)$ ($a_i\in GL_1$, $1\leq i\leq m-1$,
and $h\in GL_{abn-m+1}$), 
and $\psi_{U_m}$ is the character
$$\psi_{U_m}(u)=\psi(u_{1,2}+u_{2,3}+\cdots +u_{m-1,m}).$$ 
Note that when $m=abn$, the group $U_{abn}$ is the maximal upper triangular unipotent subgroup of $GL_{abn}$.

Let  $\underline{\alpha}=(\alpha_i)_{m\le i\le abn-1}$ with
$\alpha_i\in\{0,1\}$ for all $i$
and define
$$\psi_{U_m,\underline{\alpha}}(u)=\psi\Big(\sum_{i=1}^{m-1} u_{i,i+1}+
\sum_{i=m}^{abn-1}
\alpha_{i}u_{i,i+1}\Big).$$
Then performing Fourier expansions, one sees that the
vanishing of the integral \eqref{zero1} is equivalent to the vanishing of all the integrals
\begin{equation}\label{zero2}
\int\limits_{U_{abn}(F)\backslash U_{abn}({\A})}E_{abn,\chi}^{(r)}(u)\,
\psi_{U_m,\underline{\alpha}}(u)\,du.
\end{equation}

If $\alpha_i=1$ for all $i$, then the integral \eqref{zero2} is the Whittaker coefficient of $E_{abn,\chi}^{(r)}$
which is zero. If instead $\alpha_i=0$ for some $i$, let
$k\ge m$ be the first integer such that $\alpha_i=1$ for all $m\le i\le k$ and
$\alpha_{k+1}=0$. If $k\ne np$ for some natural number $p$, then the corresponding integral
\eqref{zero2} is zero. Indeed, since ${\Theta}_{n,\chi}^{(r)}$ is a cuspidal representation, 
the constant term  $E_{abn,\chi}^{(r),U}(g,s)$ is zero
if $U$ is not equal to $U_{(ab-l)n,ln}$ for some $l$. 
(Note that it is precisely at this point in the argument that we use the cuspidality hypothesis.)
On the other hand, if $k=np$, then it
follows from Proposition~\ref{prop1} that integral \eqref{zero2} is zero if the residue 
representation ${\mathcal E}_{np,\chi}^{(r)}$ is not generic. But since $np=k\ge m>an$, it
follows from the definition of $a$ that ${\mathcal E}_{np,\chi}^{(r)}$ is indeed not generic.
This completes the proof that ${\mathcal E}_{abn,\chi}^{(r)}$ has no nonzero Fourier coefficient
corresponding to any unipotent orbit which greater than or not related to $((an)^b)$.

The last step is to prove that ${\mathcal E}_{abn,\chi}^{(r)}$ has a nonzero Fourier coefficient
corresponding to the partition $((an)^b)$. This is proved similarly to  \cite{G1}
pp.\ 338-339; see also \cite{J-L} and \cite{O-S}. 
Let $E_{abn,\chi}^{(r)}$ be a vector in the space of ${\mathcal E}_{abn,\chi}^{(r)}$.
Then it follows from \cite{G1}, p.\ 338, that the Fourier coefficient of $E_{abn,\chi}^{(r)}$
with respect to the orbit $((an)^b)$ is given by the integral
\begin{equation}\label{f1}
f(h)=\int\limits_{V(F)\backslash V({\A})}E_{abn,\chi}^{(r)}(vh)\,\psi_V(v)\,dv.
\end{equation}
Here we let the $V_{k,p}$ be the unipotent subgroup of $GL_{kp}$
consisting of all matrices of the from
\begin{equation}\label{mat1}
\begin{pmatrix} I_{k}&X_{1,2}&*&*&\cdots&*\\ &I_{k}&X_{2,3}&*&\cdots&*\\ &&I_{k}&X_{3,4}&\cdots&*\\
&&&I_{k}&\cdots&*\\ &&&&\ddots&*\\ &&&&&I_{k}\end{pmatrix}
\end{equation}
with $I_k$ appearing $p$ times and each $X_{i,j}$ a matrix of size $k$. 
The group $V$ in the integral \eqref{f1} is the group $V_{k,p}$ with $k=b$
and $p=an$. Also, define a character
$\psi_{V_{k,p}}$ on $V_{k,p}$ by $\psi_{V_{k,p}}(v)=\psi(\text{tr} (X_{1,2}+X_{2,3}+\cdots +X_{p-1,p}))$.
 Then the character $\psi_V$ in \eqref{f1} is $\psi_{V_{b,an}}$.
 
Let $U_{an}$ denote the maximal upper unipotent subgroup of $GL_{an}$, and let
$U'=U_{an}\times\cdots\times U_{an}$ where the group $U_{an}$ appears $b$
times. This group is embedded in $GL_{anb}$ as $(u_1,\ldots,u_b)\mapsto 
\text{diag}(u_1,\ldots,u_b)$.  Let $\psi_{U'}$ be the character given by
$$\psi_{U'}(u')=\psi_{U_{an}}(u_1)\ldots \psi_{U_{an}}(u_b),$$
where $\psi_{U_{an}}$ is the standard Whittaker character of $U_{an}$. 
Then as in \cite {G1} p.\ 338,
the integral \eqref{f1} is nonzero for some choice of data if and only if the integral
\begin{equation}\label{f11}
\int\limits_{U'(F)\backslash U'({\A})}\int\limits_{V_{b,an}(F)\backslash  V_{b,an}({\A})}
E_{abn,\chi}^{(r)}(vu')\,\psi_{U'}(u')\,dv\,du'
\end{equation}
is not zero for some choice of data. Using
\eqref{type11} inductively and the irreducibility of the representation ${\mathcal E}_{an,\chi}^{(r)}$, 
we deduce that the integral \eqref{f11} is not zero for some choice of data if
the representation ${\mathcal E}_{an,\chi}^{(r)}$ is generic. This last assertion follows from 
our assumption on the number $a$.
\end{proof}

We can now prove the first part of Theorem~\ref{th1}.

\begin{proposition}\label{prop2}
Let $n\le r$ be a natural number, and suppose there exists a cuspidal theta representation ${\Theta}_{n,\chi}^{(r)}$  on $GL_n^{(r)}({\A})$. Then $n$ divides $r$.
\end{proposition}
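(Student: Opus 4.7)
The plan is to argue by contradiction. Suppose $n\nmid r$, so that $an<r<(a+1)n$; the minimal $b$ with $abn>r$ then satisfies $b\ge 2$ and $a(b-1)n\le r<abn$. By Proposition~\ref{propuni1}, the residue representation $\mathcal{E}_{abn,\chi}^{(r)}$ on $GL_{abn}^{(r)}({\A})$ has unipotent orbit $((an)^b)$; in particular, the final step of the proof of Proposition~\ref{propuni1} shows that the Fourier coefficient $f(h)$ attached to this orbit (the integral \eqref{f1}) is nonzero for some choice of data, since the unfolded form \eqref{f11} is a product of $b$ Whittaker functionals on the generic representation $\mathcal{E}_{an,\chi}^{(r)}$.

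To derive a contradiction, the plan is to compute $f(h)$ a second way that forces vanishing. I will take the constant term of $\mathcal{E}_{abn,\chi}^{(r)}$ along the unipotent radical $U_{kn,(ab-k)n}$ of the maximal parabolic $P_{kn,(ab-k)n}$ for an integer $k$ chosen in the range $a<k\le ab-1$; this range is nonempty whenever $ab\ge a+2$ (and for the boundary case $a=1$, $b=2$ one adapts the argument by splitting the orbit along a different row boundary). By Proposition~\ref{prop1}(i) (with $l=ab$, $m=ab-k$), this constant term is, on unipotent arguments of the Levi, of the form
\[
\delta_{P_{kn,(ab-k)n}}^{(r-1)/(2r)}\,\mathcal{L}_{kn,\chi}^{(r)}\otimes\mathcal{L}_{(ab-k)n,\chi}^{(r)}.
\]
Rewriting $f(h)$ as an iterated integral with this constant term taken first, and using the description of the $((an)^b)$-Fourier coefficient restricted to the Levi $GL_{kn}\times GL_{(ab-k)n}$, the inner integration on the first factor becomes a Whittaker-type integral on $\mathcal{L}_{kn,\chi}^{(r)}$. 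Since $k>a$, the representation $\mathcal{L}_{kn,\chi}^{(r)}$ is not generic by the definition of $a$, so by Proposition~\ref{prop1}(ii) this inner integral vanishes and hence $f(h)\equiv 0$, contradicting the first computation.

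The main obstacle will be carrying out the re-expression of $f(h)$ as an iterated integral that genuinely passes through a Whittaker functional on $\mathcal{L}_{kn,\chi}^{(r)}$: one must verify that the partition $((an)^b)$ splits over the Levi $GL_{kn}\times GL_{(ab-k)n}$ so as to produce a full Whittaker coefficient on the $GL_{kn}$-factor rather than a weaker Fourier coefficient that non-genericity of $\mathcal{L}_{kn,\chi}^{(r)}$ would not kill. The hypothesis $n\nmid r$ enters essentially: it is the inequality $a(b-1)n\le r<abn$ that guarantees existence of an integer $k>a$ inside $[1,ab-1]$, and this is precisely what makes the vanishing mechanism operative. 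If instead $n\mid r$, then $an=r$ and one is forced into $b=2$ with $k=1$, making no such $k>a$ available and blocking the contradiction---which is consistent with the conclusion $n\mid r$.
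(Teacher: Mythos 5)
Your first paragraph matches the paper, but the proposed ``second computation'' of $f(h)$ contains a fatal gap, and in fact cannot be repaired: the Fourier coefficient attached to the orbit $((an)^b)$ does \emph{not} factor through a full Whittaker coefficient of ${\mathcal L}_{kn,\chi}^{(r)}$ for any $k>a$. If it did, your vanishing argument would show $f(h)\equiv 0$ outright, directly contradicting the nonvanishing half of Proposition~\ref{propuni1} that you invoke in your first paragraph --- and with no role played by the hypothesis $n\nmid r$, so the argument would ``prove'' too much. The point is that when one unfolds the coefficient \eqref{f1} through constant terms along the radicals $U_{(l-m)n,mn}$ (via Proposition~\ref{prop1}), the character data coming from $((an)^b)$ only produces Whittaker integrals on blocks of size $an$, as in \eqref{f11}; these are nonzero precisely because ${\mathcal E}_{an,\chi}^{(r)}$ is generic. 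A full Whittaker character on a $GL_{kn}$-factor with $k>a$ corresponds to the regular orbit $(kn)$ of that factor and only arises from Fourier coefficients attached to orbits strictly larger than $((an)^b)$ (this is exactly the mechanism in the \emph{vanishing} half of the proof of Proposition~\ref{propuni1}, where the coefficients for $(m1^{abn-m})$, $m>an$, are killed); the obstacle you flag at the end is thus not a technical verification but the place where the approach fails. Your diagnosis of where $n\nmid r$ enters is also off: if $n\mid r$ and $a\ge 2$, then $an=r$, $b=2$, and integers $k$ with $a<k\le ab-1=2a-1$ do exist, so your mechanism would not distinguish the two cases; and in the boundary case $a=1$, $b=2$ your range of $k$ is empty and the promised ``adaptation'' is unspecified.

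The paper's contradiction is of a different nature and uses the covering group in an essential way. Since $an<r$ (this is where $n\nmid r$ is used: it forces $an\ne r$), the function $f(h)$ of \eqref{f1} is a genuine automorphic function on a nontrivial cover of $GL_b({\A})$ --- were $an=r$, the embedded $GL_b$ could split in the cover and this step would fail. A genuine function on a nontrivial cover cannot be constant, so some nontrivial Fourier coefficient \eqref{f2} of $f$ along $x(l)=I_b+le_{1,b}$ is nonzero; expanding along the auxiliary groups $Y_i$, $Z_i$ and exchanging roots, this nonvanishing is converted into a nonzero Fourier coefficient of ${\mathcal E}_{abn,\chi}^{(r)}$ attached to the strictly larger orbit $((an+1)(an)^{b-1}(an-1))$, contradicting Proposition~\ref{propuni1}. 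To fix your write-up you would need to replace the constant-term/Whittaker vanishing step by this genuineness and orbit-enlargement argument (or something equivalent that genuinely uses $an\ne r$).
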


\begin{proof}
Suppose instead that $n$ does not divide $r$.
Construct the representation ${\mathcal E}_{abn,\chi}^{(r)}$ on the group $GL_{abn}^{(r)}({\A})$ as above. 
It follows from Proposition~\ref{propuni1} that ${\mathcal O}({\mathcal E}_{abn,\chi}^{(r)})=((an)^b)$.  
Let $E_{abn,\chi}^{(r)}$ be a vector in the space of ${\mathcal E}_{abn,\chi}^{(r)}$.
Then the Fourier coefficient of $E_{abn,\chi}^{(r)}$
with respect to the orbit $((an)^b)$ is given by the integral \eqref{f1} above.

Since $an<r$ and $b\ge 2$, the Fourier coefficient \eqref{f1} defines a genuine 
automorphic function on some covering  group of $GL_b({\A})$ of degree greater than one. 
Hence $f(h)$ cannot be the constant function. 
Note that at this step we are using the hypothesis that $n$ does not divide $r$. Indeed, this assumption implies that 
$an\neq r$. By contrast, if $an=r$, then it might happen that the above embedding of the group $GL_b$ splits
under the $r$-fold cover, and we would not be able to assert that $f(h)$ is not constant.

Let $\sigma$ denote the representation generated by all functions $f(h)$ as above.  Since a non-constant automorphic function cannot equal a constant term along any unipotent subgroup, it follows that the integral
\begin{equation}\label{f2}
\int\limits_{F\backslash {\A}}f(x(l))\,\psi(l)\,dl\qquad\text{where~} x(l)=I_b+le_{1,b}
\end{equation}
is not zero for some function $f$ in $\sigma$ (here and below $e_{i,j}$ denotes the $(i,j)$-th elementary matrix).
Using this nonvanishing, we will show
that the representation ${\mathcal E}_{abn,\chi}^{(r)}$ has a nonzero Fourier coefficient corresponding to the unipotent orbit
$((an+1)(an)^{b-1}(an-1))$.

To do so, we introduce two families of unipotent subgroups of $GL_{abn}$.
First, let $Z_i$, $1\le i\le an-1$, denote the unipotent subgroup with
$$Z_i(\A)=\{r_1e_{b,1}+r_2e_{b,2}+\cdots +r_{b-1}e_{b,b-1}\ \ :\ \ r_j\in {\A}\}
\subset X_{i,i+1}$$
 and let $Z_0$ denote the group with
$$Z_0(\A)=\{r_2e_{2,1}+r_3e_{3,1}+\cdots +r_{b-1}e_{b-1,1}\ \ :\ \ r_j\in {\A}\}
\subset X_{1,2}.$$ 
Here each $X_{i,i+1}$ is embedded in $GL_{abn}$ as in \eqref{mat1}. 
Notice that $Z_0$ and $Z_1$ are two distinct subgroups of $X_{1,2}$.
Second, for $1\le i\le an-1$ let
$$Y_i(\A)=\{I_{b}+l_1e_{1,b}+l_2e_{2,b}+\cdots + l_{b-1}e_{b-1,b}\ \ :\ \ l_j\in {\A}\}$$
and let
$$Y_0(\A)=\{I_{b}+l_2e_{1,2}+l_3e_{1,3}+\cdots + l_{b-1}e_{1,b-1}\ \ :\ \ l_j\in {\A}\}$$
These groups  are embedded in $GL_{abn}$ as $\text{diag}(Y_0,Y_1,\ldots,Y_{an-1})$.
Also, let $Z$ be the unipotent subgroup of $GL_{abn}$ generated by all $Z_i$ with $0\le i\le an-1$,
and let $Y$ be the unipotent subgroup of $GL_{abn}$ generated by the $Y_i$, $0\le i\le an-1$, 
together with the one dimensional unipotent subgroup
$x(l)$ defined in \eqref{f2}.

Substituting \eqref{f2} into \eqref{f1} we then expand the integral along the unipotent subgroups $Y_i$ where $0\le i\le an-1$.
Then using the unipotent subgroups $X_i$, we obtain that
the integral \eqref{f2} is equal to
\begin{equation}\label{f3}
\int\limits_{Z({\A})}\int\limits_{V_1(F)\backslash V_1({\A})}E_{abn,\chi}^{(r)}(v_1zh)\,\psi_{V_1}(v_1)\, dv_1\,dz,
\end{equation}
where $V_1$ is the subgroup of $V$ generated by $Y$ and all the one-parameter unipotent subgroups $\{x_\alpha(t)\}$, $\alpha$ a positive root,
that are in $V$
but not in $Z$. The character $\psi_{V_1}$ matches $\psi_{V}$ on the one-parameter subgroups $\{x_\alpha(t)\}$ in $V$ that are not in $Z$ and is $\psi(y_{1,b})$
on $Y(\A)$.

To conclude the proof, we note that
the inner integration over $V_1$ in  \eqref{f3} is a Fourier coefficient corresponding to the unipotent orbit $((an+1)(an)^{b-1}(an-1))$. Since it is not zero this contradicts Proposition~ \ref{propuni1}.
\end{proof}

\section{The Uniqueness Property}\label{unique}

In this section we prove the uniqueness property given in Theorem~\ref{th1}. To do so, suppose that
there are two natural numbers, $n$ and $m$, $m<n$, with cuspidal theta representations  ${\Theta}_{n,\chi}^{(r)}$ and 
${\Theta}_{m,\chi}^{(r)}$ attached to the same character $\chi$. We shall derive  a contradiction.

As above, let $a\ge 1$ be the smallest natural number such that ${\mathcal E}_{an,\chi}^{(r)}$
is an irreducible  generic representation. From Section~\ref{divide} 
we know that $n$ divides $r$, and hence, using \cite{K-P}, we have $an\le r$. Choose the  smallest
integer $b\ge 1$ such that $abn+m>r$. Construct the residue representation 
${\mathcal E}_{abn+m,\chi}^{(r)}$ as in Section~\ref{residue}. This representation
is the residue of the Eisenstein series on $GL_{nl+m}^{(r)}({\A})$ 
attached to the induced representation 
\begin{equation}\label{ind23}\notag
\Ind_{P_{n,\ldots,n,m}^{(r)}({\A})}^{GL_{nl+m}^{(r)}({\A})}({\mathcal E}_{an,\chi}^{(r)}\otimes {\mathcal E}_{an,\chi}^{(r)}\otimes\cdots\otimes {\mathcal E}_{an,\chi}^{(r)}
\otimes\Theta_{m,\chi}^{(r)})\eta_{{\mathbf s}}
\end{equation}

Denote by $U_{n,\ldots,n,m}^-$ the
transpose of the unipotent group $U_{n,\ldots,n,m}$ defined in Section~\ref{residue}.

The following Lemma is standard.
\begin{lemma}\label{lemuni1}
Let $U$ denote the unipotent radical of a maximal parabolic subgroup of $GL_{abn+m}$. 
If there is no Weyl 
element $w$ in $GL_{abn+m}$ such that $wUw^{-1}$ is a subgroup of $U_{n,\ldots,n,m}^-$, then the
constant term $E_{abn+m,\chi}^{(r),U}(g)$ is zero for all choices of data.
\end{lemma}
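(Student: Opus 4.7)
The plan is to exploit the iterative description of $\mathcal{E}_{abn+m,\chi}^{(r)}$ reviewed in Section~\ref{residue} so as to reduce the computation to the standard unfolding of the constant term of an Eisenstein series induced from genuinely cuspidal data. Concretely, one can realize $\mathcal{E}_{abn+m,\chi}^{(r)}$ (or, more precisely, the larger residual representation of which it is an irreducible summand) as the residue of an Eisenstein series $\widetilde E(g,\mathbf{t},\mathbf{s})$ on $GL_{abn+m}^{(r)}(\A)$ induced from the deeper parabolic $P_{n,\ldots,n,m}$ (with $ab$ copies of $n$) carrying the cuspidal datum $(\Theta_{n,\chi}^{(r)})^{\otimes ab}\otimes \Theta_{m,\chi}^{(r)}$ on its Levi $M=GL_n^{ab}\times GL_m$. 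Here $\mathbf{t}$ encodes the auxiliary parameters in which one takes iterated residues to recover the $\mathcal{E}_{an,\chi}^{(r)}$ factors, and $\mathbf{s}$ is the parameter of the outer Eisenstein series. Because the constant-term functor commutes with taking residues, the lemma will follow once we verify that $\widetilde E^U(g,\mathbf{t},\mathbf{s})\equiv 0$ under the stated hypothesis.

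To establish this vanishing, I would interchange sum and integral and then partition the sum over $P_{n,\ldots,n,m}(F)\backslash GL_{abn+m}(F)$ via the Bruhat decomposition
\[
GL_{abn+m}(F)=\bigsqcup_{w}P_{n,\ldots,n,m}(F)\,w\,Q(F),
\]
where $Q$ is the maximal parabolic whose unipotent radical is $U$ and $w$ runs over minimal-length representatives of $W_M\backslash W/W_{M_Q}$. The standard bookkeeping (cf.~\cite{M-W}, II.1.7, and \cite{B-F-G}, (1.2), as was already used in the proof of Proposition~\ref{prop1}) shows that the contribution of the cell indexed by $w$ factors through the constant term of the inducing datum along the unipotent subgroup $wUw^{-1}\cap M$ of $M$. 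Because $M$ is block diagonal, this intersection decomposes as a product across the Levi factors, and because every factor of the inducing datum is cuspidal on its respective general linear group, the whole cell contribution vanishes identically whenever $wUw^{-1}\cap M$ is nontrivial.

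Choosing $w$ of minimal length also ensures $wUw^{-1}\cap U_{n,\ldots,n,m}=1$, so the only cells that can survive are those with $wUw^{-1}\subseteq U_{n,\ldots,n,m}^-$. Under the hypothesis of the lemma no such $w$ exists; therefore every cell vanishes, $\widetilde E^U\equiv 0$, and taking the iterated residue in $\mathbf{t}$ yields $E_{abn+m,\chi}^{(r),U}(g)=0$ for all choices of data. The delicate step is the verification that, after absorbing the inner sum into the integration over $U(\A)$ and stripping off the pieces of $U$ which $w$ conjugates into $U_{n,\ldots,n,m}$ or $U_{n,\ldots,n,m}^-$, the remaining integral really does compute a constant term of the cuspidal inducing datum along $wUw^{-1}\cap M$; this is the same sort of routine calculation carried out in the proofs of Proposition~\ref{prop1} and Lemma~\ref{lem0}, and the $r$-fold metaplectic cover introduces no new difficulty because all of the unipotent subgroups in question split canonically via the trivial section, as noted in the discussion surrounding~\eqref{ind1}.
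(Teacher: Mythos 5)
Your framework---realizing $\mathcal{E}_{abn+m,\chi}^{(r)}$ inside the multi-parameter Eisenstein series attached to the cuspidal datum $(\Theta_{n,\chi}^{(r)})^{\otimes ab}\otimes\Theta_{m,\chi}^{(r)}$ on $P_{n,\ldots,n,m}$, unfolding the constant term along $U$ over the double cosets $P_{n,\ldots,n,m}(F)\backslash GL_{abn+m}(F)/Q(F)$, and observing that the cell indexed by $w$ sees the constant term of the cuspidal datum along $wUw^{-1}\cap M$---is the right starting point, and it is how the ``standard'' argument alluded to in the paper (and carried out for Proposition~\ref{prop1}) begins. But there is a genuine gap at the decisive step. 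Cuspidality only kills the cells with $wUw^{-1}\cap M\neq 1$; it does not follow that the surviving cells have $wUw^{-1}\subseteq U^-_{n,\ldots,n,m}$, since a root subgroup of $U$ may be sent by $w$ into the \emph{positive} radical $U_{n,\ldots,n,m}$ while avoiding $M$, and your claim that minimal length forces $wUw^{-1}\cap U_{n,\ldots,n,m}=1$ is false (the identity is a minimal-length representative). This failure occurs exactly in the case the lemma is invoked for in Proposition~\ref{prop3}: for $U=U_{abn,m}$ one has $U\subseteq U_{n,\ldots,n,m}$ and $U\cap M=1$, so the identity cell survives the cuspidality test and contributes essentially the Eisenstein series on the Levi $GL_{abn}\times GL_m$ attached to the same cuspidal datum, which is not identically zero. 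Hence your intermediate assertion $\widetilde{E}^U\equiv 0$ is false precisely in the relevant case; indeed, if cuspidality alone sufficed, the constant terms along $U_{abn,m}$ and $U_{m,abn}$ would behave symmetrically, whereas the asymmetry between them is the whole point of the paper's argument.

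What is missing is the second half of the standard argument: after discarding the cells killed by cuspidality, one must show that every surviving cell whose $w$ does not conjugate $U$ into $U^-_{n,\ldots,n,m}$ contributes a term that is \emph{holomorphic} at the (iterated) residue point, because the intertwining operator $M(w,\mathbf{s})$ attached to such a $w$ does not acquire the full multi-pole (for the identity cell, for instance, the resulting Eisenstein series on the Levi of $Q$ is holomorphic in the parameter directions transverse to that Levi and so dies when the last residues are taken). Only cells with $wUw^{-1}\subseteq U^-_{n,\ldots,n,m}$ can carry the pole, and by hypothesis there are none; this is the same dichotomy used in the proof of Proposition~\ref{prop1}, where the contribution of $w_0$ carries the simple pole and all other nonvanishing cells are holomorphic at $s=\frac{r+1}{2r}$. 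Your proof needs this pole/exponent analysis (cf.\ \cite{M-W}, II.1.7, together with the location of the poles of the rank-one factors) to be complete; as written, the vanishing you assert is a statement about the full Eisenstein series, which is false, rather than about its residue.
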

With this we have the following analogue of Proposition~\ref{prop1}.
\begin{proposition}\label{prop21}
Let  $U$ denote the unipotent radical of the maximal parabolic subgroup of $GL_{abn+m}$ whose Levi part is $GL_{r_1}\times GL_{r_2}$ with $r_1=m+kn$ and $r_2=(ab-k)n$ for some $k\ge 0$. Suppose that $wUw^{-1}$ is a subgroup of $U_{n,\ldots,n,m}^-$ for some Weyl element $w$. Let $E_{abn+m,\chi}^{(r)}$ be a vector in the space of ${\mathcal E}_{abn+m,\chi}^{(r)}$.
Then for $i=1,2$ there exist $E_{r_i,\chi}^{(r)}$ in
the space of ${\mathcal E}_{r_i,\chi}^{(r)}$ such that
\begin{equation*} 
E_{abn+m,\chi}^{(r),U}\left (\begin{pmatrix} v_1&\\ &v_2\end{pmatrix}\right )=E_{m+kn,\chi}^{(r)}(v_1)
E_{(ab-k)n,\chi}^{(r)}(v_2)
\end{equation*}
for all unipotents $v_i\in GL_{r_i}(\A)$.
Moreover a statement similar to Proposition~\ref{prop1}, part (ii),
holds in this case as well.
\end{proposition}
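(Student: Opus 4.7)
The plan is to adapt the unfolding argument from Proposition~\ref{prop1} to the present situation, where the inducing data now contains both copies of ${\mathcal E}_{an,\chi}^{(r)}$ and a copy of $\Theta_{m,\chi}^{(r)}$.  Using the inductive description of ${\mathcal E}_{abn+m,\chi}^{(r)}$ at the end of Section~\ref{residue}, we realize it as the residue of an Eisenstein series $E_{abn+m,\chi}^{(r)}(g,\eta_{\mathbf s})$ on $GL_{abn+m}^{(r)}(\A)$ attached to the induced representation
\begin{equation*}
\Ind_{P_{an,\ldots,an,m}^{(r)}(\A)}^{GL_{abn+m}^{(r)}(\A)}\bigl({\mathcal E}_{an,\chi}^{(r)}\otimes\cdots\otimes{\mathcal E}_{an,\chi}^{(r)}\otimes\Theta_{m,\chi}^{(r)}\bigr)\eta_{\mathbf s}.
\end{equation*}
For $\textup{Re}(\eta_{\mathbf s})$ in the domain of absolute convergence I would compute the constant term $E_{abn+m,\chi}^{(r),U}(g,\eta_{\mathbf s})$ by the standard unfolding of Eisenstein series (as in M\oe glin--Waldspurger II.1.7), writing it as a sum of Eisenstein and degenerate Eisenstein series indexed by a set of representatives for the double cosets $P_{an,\ldots,an,m}(F)\backslash GL_{abn+m}(F)/P_{r_1,r_2}(F)$.

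The hypothesis that $wUw^{-1}\subset U_{n,\ldots,n,m}^-$ for some Weyl element $w$ picks out a distinguished double coset: namely the one whose representative groups the first $k$ blocks of size $n$ together with the block of size $m$ into one block of size $r_1=kn+m$, and leaves the remaining $ab-k$ blocks of size $n$ as one block of size $r_2=(ab-k)n$.  As in the proof of Proposition~\ref{prop1}, all other double coset contributions should be holomorphic at the point $\eta_{\mathbf s}=\delta_{P_{an,\ldots,an,m}}^{\frac{r+1}{2r}}$ where we take the residue of $E_{abn+m,\chi}^{(r)}(g,\eta_{\mathbf s})$, while the contribution coming from the distinguished double coset carries a simple pole given by the associated intertwining operator.

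Applying that intertwining operator and taking residues, the resulting automorphic function on the Levi $GL_{r_1}^{(r)}(\A)\times GL_{r_2}^{(r)}(\A)$ factors according to the block decomposition of the inducing data.  The first factor is a vector in the space of the residue representation on $GL_{r_1}^{(r)}(\A)$ attached to $({\mathcal E}_{an,\chi}^{(r)},\ldots,{\mathcal E}_{an,\chi}^{(r)},\Theta_{m,\chi}^{(r)})$ (with $k$ copies of ${\mathcal E}_{an,\chi}^{(r)}$), which by the inductive construction at the end of Section~\ref{residue} is precisely ${\mathcal E}_{r_1,\chi}^{(r)}$; similarly the second factor lies in ${\mathcal E}_{r_2,\chi}^{(r)}$.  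This yields the claimed factorization of $E_{abn+m,\chi}^{(r),U}$.

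The main obstacle is verifying that exactly one double coset contributes a pole at the residue point and that all others give holomorphic terms; this requires tracking the poles and zeros of the intertwining operators attached to each double coset representative, in the spirit of \cite{M-W} II.1.7 and the references cited in the proof of Proposition~\ref{prop1}, and it is the hypothesis on $w$ that guarantees the distinguished coset in fact exists.  For the analogue of part (ii) of Proposition~\ref{prop1}, I would simply insert the characters $\psi_{V_1}$ and $\psi_{V_2}$ on the two blocks and apply the factorization from part (i): vanishing of the corresponding Fourier coefficient of either ${\mathcal E}_{r_i,\chi}^{(r)}$ for all choices of data immediately forces the full integral to vanish, exactly as in the proof of Proposition~\ref{prop1}(ii).
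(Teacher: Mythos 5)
Your proposal is correct and follows essentially the same route as the paper, which states Proposition~\ref{prop21} as a direct analogue of Proposition~\ref{prop1}: one unfolds the constant term of the Eisenstein series built from $({\mathcal E}_{an,\chi}^{(r)},\ldots,{\mathcal E}_{an,\chi}^{(r)},\Theta_{m,\chi}^{(r)})$ over the relevant double cosets, and only the coset singled out by the hypothesis $wUw^{-1}\subset U_{n,\ldots,n,m}^-$ contributes a pole at the residue point, its intertwining operator giving the factorization through ${\mathcal E}_{r_1,\chi}^{(r)}\otimes{\mathcal E}_{r_2,\chi}^{(r)}$ via the inductive construction of Section~\ref{residue}, with part (ii) following from this factorization exactly as in Proposition~\ref{prop1}.
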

With these properties we can prove
\begin{proposition}\label{propuni2}
Under the hypotheses of this section, ${\mathcal O}({\mathcal E}_{abn+m,\chi}^{(r)})=((an)^bm)$.
\end{proposition}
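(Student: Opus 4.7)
The plan is to parallel the argument of Proposition~\ref{propuni1}, with the inducing data ${\mathcal E}_{an,\chi}^{(r)}\otimes\cdots\otimes{\mathcal E}_{an,\chi}^{(r)}$ now augmented by the additional cuspidal theta factor $\Theta_{m,\chi}^{(r)}$. As in that proof, two assertions must be verified: first, that ${\mathcal E}_{abn+m,\chi}^{(r)}$ admits no nonzero Fourier coefficient attached to any partition strictly greater than or incomparable with $((an)^bm)$; and second, that it does admit a nonzero Fourier coefficient attached to $((an)^bm)$ itself.

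For the upper bound, I would invoke the reduction of \cite{G1,J-L} to reduce the claim to showing that every Fourier coefficient of ${\mathcal E}_{abn+m,\chi}^{(r)}$ attached to a partition of the form $(k_0\,1^{abn+m-k_0})$ with $k_0>an$ vanishes. Following the Weyl-element conjugation and iterated Fourier expansion carried out in Proposition~\ref{propuni1}, such a coefficient is a sum of integrals
\[
I(\underline{\alpha})=\int_{U_{abn+m}(F)\backslash U_{abn+m}(\A)} E_{abn+m,\chi}^{(r)}(u)\,\psi_{U_{k_0},\underline{\alpha}}(u)\,du,\qquad \underline{\alpha}\in\{0,1\}^{abn+m-k_0},
\]
and it suffices to handle each $I(\underline{\alpha})$ separately. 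Let $k\geq k_0>an$ be the largest index with $\alpha_i=1$ for all $i\leq k$. If $\underline{\alpha}\equiv 1$ then $I(\underline{\alpha})$ is the full Whittaker coefficient of ${\mathcal E}_{abn+m,\chi}^{(r)}$, which vanishes because $abn+m>r$. Otherwise $\alpha_{k+1}=0$, and Lemma~\ref{lemuni1} forces $k$ to be expressible from the parts $(n,\ldots,n,m)$, so $k=pn$ or $k=pn+m$. When $k=pn$ the condition $k>an$ gives $p>a$, and Proposition~\ref{prop21} factors $I(\underline{\alpha})$ through the Whittaker coefficient of ${\mathcal L}_{pn,\chi}^{(r)}$, which vanishes by definition of $a$, as in Proposition~\ref{propuni1}. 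When $k=pn+m$ the constraint $0<m<n$ together with $k>an$ forces $p\geq a$; for $p>a$ the first-block factor is a Whittaker coefficient on the residue of $(\Theta_{n,\chi}^{(r)})^{\otimes p}\otimes\Theta_{m,\chi}^{(r)}$ on $GL_{pn+m}^{(r)}$, which vanishes since $pn+m>r$.

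For the non-vanishing, I would follow the scheme of \cite{G1} pp.~338--339 used at the end of Proposition~\ref{propuni1}. The Fourier coefficient attached to $((an)^bm)$ is expressed via a formula analogous to \eqref{f1} with $V$ and $\psi_V$ chosen to match the block pattern $((an)^b,m)$. Iterated application of \eqref{type11}, extended via Proposition~\ref{prop21} to accommodate the additional $\Theta_{m,\chi}^{(r)}$ factor, together with the irreducibility of the generic summand ${\mathcal E}_{an,\chi}^{(r)}$, reduces the nonvanishing to that of a product of Whittaker integrals on the inducing data $({\mathcal E}_{an,\chi}^{(r)},\ldots,{\mathcal E}_{an,\chi}^{(r)},\Theta_{m,\chi}^{(r)})$. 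Each factor is generic---the ${\mathcal E}_{an,\chi}^{(r)}$'s by the choice of $a$, and $\Theta_{m,\chi}^{(r)}$ because $m\leq r$---so the product is nonzero for suitable data.

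The main obstacle I anticipate is the borderline case $k=pn+m$ with $p=a$ in the upper bound. Unlike Proposition~\ref{propuni1}, where $k>an$ with $k$ a multiple of $n$ automatically forced $p>a$ and hence non-genericity on the first block, the presence of the $\Theta_{m,\chi}^{(r)}$ factor creates the intermediate value $k=an+m$, at which the first-block residue on $GL_{an+m}^{(r)}$ may still be generic when $an+m\leq r$, so the naive non-genericity argument breaks down. To close this gap I would exploit the second-block companion factor: the condition $\alpha_{k+1}=0$ renders the first off-diagonal of the character on ${\mathcal E}_{a(b-1)n,\chi}^{(r)}$ trivial, and after further Fourier expansion this factor is identified with a Fourier coefficient of ${\mathcal E}_{a(b-1)n,\chi}^{(r)}$ attached to a partition larger than or incomparable with $((an)^{b-1})$, at which point Proposition~\ref{propuni1} applied to this smaller residue delivers the required vanishing.
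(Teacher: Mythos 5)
Your overall plan coincides with the paper's: the vanishing half is run ``as in Proposition~\ref{propuni1}'', and the nonvanishing half follows the scheme of \cite{G1}, reducing via Proposition~\ref{prop21} along $U_{m,abn}$ to the Whittaker coefficients of $\Theta_{m,\chi}^{(r)}$ and of ${\mathcal E}_{an,\chi}^{(r)}$, which are nonzero by cuspidality and by the choice of $a$. For that half the paper is considerably more explicit than you are (the coefficient \eqref{f4} is written out with a parity case analysis, conjugated by the Weyl element $\nu_1$ attached to the torus $h_{\mathcal O}(t)$, and simplified by root exchanges that themselves use the vanishing half), but your sketch is consistent with that route.

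The genuine problem is in the vanishing half, precisely at the place you flag. Two remarks on your case analysis first: for a break at $k=pn$, Proposition~\ref{prop21} does not apply (it only treats first blocks of size $m+kn$); the correct tool is Lemma~\ref{lemuni1}, which gives outright vanishing because $pn$ is not a sum of trailing blocks of $(n,\ldots,n,m)$ when $0<m<n$. Also, your assertion that $p>a$ forces $pn+m>r$ is unjustified: the paper only knows $an\le r$, and if $an<r$ one can have $(a+1)n+m\le r$, so ``size exceeds $r$'' cannot be invoked; one would need non-genericity of the mixed residue ${\mathcal E}_{pn+m,\chi}^{(r)}$ itself. This becomes acute in the borderline case $k=an+m$, and your proposed repair via the second block does not work. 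After applying Proposition~\ref{prop21} at $U_{an+m,a(b-1)n}$, the character carried by the second factor ${\mathcal E}_{a(b-1)n,\chi}^{(r)}$ is determined by the remaining coordinates $\alpha_i$, $i\ge k+2$, which range over \emph{all} patterns in the expansion; for the pattern realizing the orbit $((an)^{b-1})$ that factor is nonzero, so no vanishing can be extracted from it. The condition $\alpha_{k+1}=0$ only trivializes the character on the block-boundary entry --- that is exactly what produced the constant term --- and imposes nothing on the second block; in particular the resulting coefficient of ${\mathcal E}_{a(b-1)n,\chi}^{(r)}$ is not one attached to an orbit greater than or incomparable with $((an)^{b-1})$. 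Consequently the vanishing in this case must come from the first factor, i.e.\ one must show that the residue ${\mathcal E}_{an+m,\chi}^{(r)}$ built from ${\mathcal E}_{an,\chi}^{(r)}\otimes\Theta_{m,\chi}^{(r)}$ has identically vanishing Whittaker coefficient even when $an+m\le r$, and your proposal supplies no argument for this. Without it, the upper bound --- and hence the conclusion ${\mathcal O}({\mathcal E}_{abn+m,\chi}^{(r)})=((an)^bm)$ rather than, say, $((an+m)(an)^{b-1})$ --- is not established. (The paper omits the details of this half, but the gap in your patch is real and needs to be closed.)
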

\begin{proof}
There are two things to establish.  The first is the vanishing property of the
Fourier coefficients with respect to orbits that are greater than
or incomparable with $((an)^bm)$.
This vanishing is established similarly to the proof of Proposition~\ref{propuni1} above. We omit the details. 

The second part of the assertion is the nonvanishing of a Fourier coefficient attached to the partition
$((an)^bm)$. We now describe such a coefficient.
The description depends on the parity relation between $an$ and $m$. We shall give
the details in the case where both numbers are odd. The other cases are similar.

Let $V$ denote the unipotent subgroup of $GL_{abn+m}$ consisting of all matrices of the form
\begin{equation}\label{mat5}
\begin{pmatrix} v_1&v_4&v_6\\ &v_2&v_5\\ &&v_3\end{pmatrix}\qquad v_1,v_3\in V_{b,(an-m)/2},\quad
v_2\in V_{b+1,m}.\notag
\end{equation}
Here the groups $V_{k,p}$ were defined in \eqref{mat1} above, and $v_4,v_5,v_6$ are general suitably-sized matrices.
Write $v_4=\begin{pmatrix} *&*\\ v_4'&*\end{pmatrix}$ where $v_4'\in \Mat_{b\times b}$, and let 
$\psi_1(v_4)=\psi(\text{tr}\ v_4')$. Similarly, write 
$v_5=\begin{pmatrix} *&*\\ v_5'&*\\ v_5''&*\end{pmatrix}$ with $v_5'\in \Mat_{b\times b}$ and
$v_5''\in \Mat_{1\times b}$, and let $\psi_2(v_5)=\psi(\text{tr}\ v_5')$.
Let $\psi_V$ be the character 
$$\psi_V(v)=\psi_{V_{b,(an-m)/2}}(v_1)\,\psi_{V_{b+1,m}}(v_2)\,\psi_{V_{b,(an-m)/2}}(v_3)\,
\psi_1(v_4)\,\psi_2(v_5).$$ 
Then a
Fourier coefficient associated with the partition $((an)^bm)$ is given by
\begin{equation}\label{f4}
\int\limits_{V(F)\backslash V({\A})}E_{abn+m,\chi}^{(r)}(vh)\,\psi_V(v)\,dv.
\end{equation}

Let $\nu_1$ be the Weyl element of $GL_{abn+m}$ defined as follows. Write
$$\nu_1=\begin{pmatrix} w_0\\ w_1\\ \vdots\\ w_b\end{pmatrix}\qquad w_0\in \Mat_{m\times (anb+m)},\qquad
w_j\in \Mat_{an\times (anb+m)},~ 1\le j\le b.$$
Here the matrix $w_0$ has  $(i, b(i+t))$ entries equal to $1$ , $1\le i\le m$,  
and all other entries $0$, where $t=(an-m)/2$. The matrices $w_j$, $1\le j\le b$, have 
entries of $1$ at the $(i_1,j+(i_1-1)b),\ (t+i_2, j+tb+i_2(b+1))$ and 
$(t+m+i_3+1, j+tb+m(b+1)+i_3b)$ positions for all $1\le i_1\le t+1$, 
$1\le i_2\le m$ and $1\le i_3\le t-1$, and all other entries $0$. 
This Weyl element may be characterized as follows. As explained in \cite{G1}, to any unipotent orbit ${\mathcal O}$ one can attach a one  dimensional torus $\{h_{\mathcal O}(t)\}$. In our case, for the unipotent orbit
${\mathcal O}=((an)^bm)$, 
\begin{equation}\label{tor1}\notag
h_{\mathcal O}(t)=\text{diag}\ (t^{an-1}I_b, t^{an-3}I_b,\ldots,t^{m+1}I_b,t^{m-1}I_{b+1},\ldots,
t^{-(m-1)}I_{b+1}, t^{-(m+1)}I_b,\ldots, t^{-(an-1)}I_b).
\end{equation}
The Weyl element $\nu_1$ is the shortest Weyl element in $GL_{abn+m}$ which conjugates the
torus $\{h_{\mathcal O}(t)\}$ to the torus $\{h(t)\}$ with $h(t)=\text{diag}(d_m(t),d_{an}(t),\ldots, d_{an}(t))$, where for
all $i>0$ we have $d_i(t)=\text{diag}(t^{i-1}, t^{i-3},\ldots,  t^{-(i-3)}, t^{-(i-1)})$.

Using the invariance of $E_{abn+m,\chi}^{(r)}$ by  $\nu_1$ and moving it rightward via conjugation, the integral \eqref{f4} is equal to
\begin{equation}\label{f5}
\int\limits_{Z(F)\backslash Z({\A})}\int\limits_{U'(F)\backslash U'({\A})}
\int\limits_{Y(F)\backslash Y({\A})}E_{abn+m,\chi}^{(r)}(yu'z\nu_1 h)\,\psi_{U'}
(u')\,dy\,du'\,dz.
\end{equation}
The notation here is as follows. Let $U_k$ denote the maximal unipotent subgroup
of $GL_k$ consisting of upper triangular matrices. Then
$U'=U_m\times U_{an}\times\cdots\times U_{an}$ where the group $U_{an}$ appears $b$
times. This group is embedded inside $GL_{anb+m}$ as $(u_0,u_1,\ldots,u_b)\mapsto 
\text{diag}(u_0,u_1,\ldots,u_b)$.  The character $\psi_{U'}$ is given by
$$\psi_{U'}(u')=\psi_{U_m}(u_0)\psi_{U_{an}}(u_1)\ldots \psi_{U_{an}}(u_b),$$
where $\psi_{U_k}$ is the standard Whittaker character of $U_k$. 
The group $Y$ is the upper triangular unipotent group defined by $Y=\nu_1V\nu_1^{-1}\cap U_{m,an,\ldots,an}$. 
The group $Z$ is the  lower triangular unipotent group consisting of all elements $v\in V$ such that 
$\nu_1v\nu_1^{-1}\in U_{m,an,\ldots,an}^-$ where the group $U_{m,an,\ldots,an}^-$ is the transpose of the
unipotent group $U_{m,an,\ldots,an}$. 
Another way of characterizing these groups is by means of
the torus $\{h(t)\}$. The group $Y$ is generated by the matrices $y_{i,j}(k)=I_{abn+m}+ke_{i,j}\in U_{m,an,\ldots,an}$ 
such that $h(t)y_{i,j}(k)h(t)^{-1}=y_{i,j}(t^\ell k)$ for some $\ell>0$. Similarly, the group $Z$ is generated by all matrices $z_{i,j}(k)=I_{abn+m}+ke_{i,j}\in U_{m,an,\ldots,an}^-$ such that $h(t)z_{i,j}(k)h(t)^{-1}=z_{i,j}(t^\ell k)$ for some $\ell>0$.

The next step is to perform certain Fourier expansions on the integral \eqref{f5}, using root exchange and the vanishing
of the Fourier coefficients of the representation 
${\mathcal E}_{abn+m,\chi}^{(r)}$  corresponding to unipotent orbits
which are greater than or not comparable to $((an)^bm)$. This process is fairly standard -- see
for example the proof of Ginzburg-Rallis-Soudry \cite{G-R-S}, Lemma 2.4 --  and so we only sketch the ideas. View $U_{m,an,\ldots,an}$
as the group of matrices generated by $u_{i,j}(k)=I_{abn+m}+ke_{i,j}$. Similarly for the groups $Y$
and $Z$. Consider the subgroup $u_{an+m-1,abn+m}(k)$. Since $h(t)u_{an+m-1,abn+m}(k)h(t)^{-1}=
u_{an+m-1,abn+m}(k)$, this one dimensional  unipotent group is not in $Y$. Similarly, 
conjugating by $h(t)$ we deduce that $u_{i,abn+m}(k)$ is in $Y$ for all $1\le i\le an+m-2$,
and that $z_{abn+m-1,an+m-1}(k)$ is in $Z$.
We may continue this process, going from the 
last to the first column in $U_{m,an,\ldots,an}$. When we encounter a unipotent group of the form
$u_{i,j}(k)$ in $U_{m,an,\ldots,an}$ which is not in $Y$ we look for a suitable unipotent subgroup 
of $Z$. If such a subgroup exists, we perform a root exchange. If not, we check that the Fourier coefficient 
obtained corresponds to a unipotent
orbit which is greater than or not related to $((an)^bm)$. This implies that all non-trivial
characters of the expansion contribute zero, and we are left with only the trivial character.

By this argument, we see that  integral \eqref{f4} is
not zero for some choice of data if and only if the integral
\begin{equation}\label{f6}
\int\limits_{U'(F)\backslash U'({\A})}
\int\limits_{U_{m,an,\ldots,an}(F)\backslash U_{m,an,\ldots,an}({\A})}E_{abn+m,\chi}^{(r)}(uu'h)\,\psi_{U'}(u')\,du\,du'
\end{equation}
is not zero for some choice of data. 

Notice that $U_{m,abn}$ is a subgroup of $U_{m,an,\ldots,an}$, and it is the unipotent radical
of the maximal parabolic subgroup $P_{m,abn}$. Hence we can apply inductively Proposition~\ref{prop21} with
to deduce that the integral \eqref{f6} is not zero for some choice of data if  the
two integrals
\begin{equation}\label{f7}
\int\limits_{U_m(F)\backslash U_m({\A})}
\theta_{m,\chi}^{(r)}(u)\,\psi_{U_m}(u)\,du
\end{equation}
and 
\begin{equation}\label{f8}
\int\limits_{U_{abn}(F)\backslash U_{abn}({\A})}
E_{an,\chi}^{(r)}(u)\,\psi_{U_{an}}(u)\,du
\end{equation}
are each nonzero for suitable data. 
The integral \eqref{f7} is not zero since $\Theta_{m,\chi}^{(r)}$ is an irreducible  cuspidal representation, and hence generic. It follows from the irreducibility of 
${\mathcal E}_{an,\chi}^{(r)}$ and from the definition of $a$, that the second integral, \eqref{f8}, is also nonzero for some choice of data.
\end{proof}

With the above we can now prove
\begin{proposition}\label{prop3}
Fix $r$ and $\chi$. Then there is at most one natural number $n$ such that a cuspidal
theta representation ${\Theta}_{n,\chi}^{(r)}$ exists.
\end{proposition}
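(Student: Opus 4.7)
The plan is to assume for contradiction that there exist $m<n$ both admitting cuspidal theta representations ${\Theta}_{m,\chi}^{(r)}$ and ${\Theta}_{n,\chi}^{(r)}$ for the same $\chi$, and to use the residue representation ${\mathcal E}_{abn+m,\chi}^{(r)}$ built at the opening of this section to reach a contradiction. By Proposition~\ref{propuni2}, its unipotent orbit is $((an)^bm)$, so the Fourier coefficient \eqref{f4} is nonzero for a suitable vector. The entire strategy is to re-analyze this same Fourier coefficient and show that it must vanish, which is the contradiction sought.

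The guiding principle, flagged in the introduction, is that ${\mathcal E}_{abn+m,\chi}^{(r)}$ fails to be symmetric under the outer automorphism of the Dynkin diagram of $GL_{abn+m}$: the inducing parabolic $P_{n,\ldots,n,m}$ (with the $\Theta_{m,\chi}^{(r)}$ block at the end) and the associated parabolic $P_{m,n,\ldots,n}$ (with it at the front) are different, and the constant terms supported by ${\mathcal E}_{abn+m,\chi}^{(r)}$ distinguish them, as dictated by Proposition~\ref{prop21} and Lemma~\ref{lemuni1}. The partition $((an)^bm)$ itself, on the other hand, does not record the position of its $m$-part. So the Fourier coefficient \eqref{f4} can be unfolded by a Weyl element $\nu_2$ that conjugates the torus $h_{\mathcal O}(t)$ so as to place $d_m(t)$ in a different block position than the $\nu_1$ used in Proposition~\ref{propuni2}. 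Performing this second unfolding, with the same root-exchange steps as in \cite{G-R-S}, reduces \eqref{f4} to a Whittaker-type integral against the constant term of ${\mathcal E}_{abn+m,\chi}^{(r)}$ along a unipotent whose Levi ordering corresponds to $P_{m,n,\ldots,n}$ rather than $P_{n,\ldots,n,m}$. Expanding this constant term via the Eisenstein series definition and analyzing it using Proposition~\ref{prop21} should show that it vanishes at the residual point: the only non-holomorphic Weyl contribution is the long intertwining operator moving $\Theta_{m,\chi}^{(r)}$ past the copies of ${\mathcal E}_{an,\chi}^{(r)}$, and at $\eta_{{\mathbf s}}=\delta_{P_{n,\ldots,n}}^{(r+1)/(2r)}$ the relevant product of Rankin--Selberg $L$-factors forces its image on the Whittaker functional to vanish.

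The main obstacle is pinpointing this vanishing precisely. In Proposition~\ref{prop2} the contradiction was obtained directly, by exhibiting a Fourier coefficient for a strictly larger orbit than $((an)^b)$ using that $an<r$ forces non-constancy of a genuine function on a non-split cover of $GL_b$. Here, however, $n\mid r$ and $an=r$, and the analogous cover of $GL_b$ inside the stabilizer of $((an)^bm)$ splits, so the direct adaptation of that endgame is unavailable. Instead one must exploit the outer-automorphism asymmetry, tracking the intertwining operators involving $\Theta_{m,\chi}^{(r)}\times{\mathcal E}_{an,\chi}^{(r)}$-type $L$-factors at the residual point and identifying the combinatorial configuration in which the required composition must vanish --- equivalently, showing that ${\mathcal E}_{abn+m,\chi}^{(r)}$ does not arise as a residue induced from the associated parabolic $P_{m,n,\ldots,n}$. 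This is where the bulk of the technical work in the proof will lie.
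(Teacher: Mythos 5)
Your skeleton matches the paper's: assume both ${\Theta}_{m,\chi}^{(r)}$ and ${\Theta}_{n,\chi}^{(r)}$ exist, form ${\mathcal E}_{abn+m,\chi}^{(r)}$, invoke Proposition~\ref{propuni2} to get nonvanishing of \eqref{f4}, and then re-unfold that same coefficient with a second Weyl element $\nu_2$ placing the $GL_m$-block in the other position, exploiting the asymmetry between $P_{n,\ldots,n,m}$ and its associate. But the decisive step --- why the second unfolding yields something that must vanish --- is where your proposal both leaves a gap and points in the wrong direction. The orientation is reversed: in the paper, $\nu_1$ (used for the nonvanishing in Proposition~\ref{propuni2}) moves the $m$-block to the front, leading to \eqref{f6} and the constant term along $U_{m,abn}$, which is the \emph{nonzero} one by Proposition~\ref{prop21}; the element $\nu_2$ moves the $m$-block to the end, leading to \eqref{f9}, whose inner integration factors through the constant term along $U_{abn,m}$. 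If, as you propose, the second unfolding is steered toward the constant term with Levi ordering $P_{m,n,\ldots,n}$ ($m$ first), you land exactly on the coefficient already known to be nonzero --- the one used to prove Proposition~\ref{propuni2} --- and no contradiction can come from it.

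Moreover, the vanishing needs none of the machinery you invoke. You propose to track the long intertwining operator and a product of Rankin--Selberg $L$-factors at the residual point and to argue that the image under the Whittaker functional vanishes, and you flag this as the bulk of the technical work; that analysis is neither carried out nor what the paper does. In the paper the vanishing is essentially immediate: $U_{abn,m}$ is the unipotent radical of a maximal parabolic, and there is no Weyl element $w$ with $wU_{abn,m}w^{-1}\subseteq U^-_{n,\ldots,n,m}$ --- the combinatorial point being that $m<n$ is not an initial partial sum of the block sizes $n,\ldots,n,m$ --- so Lemma~\ref{lemuni1}, i.e.\ cuspidality of the inducing data, forces the constant term along $U_{abn,m}$ to vanish for all choices of data. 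Since $U_{abn,m}$ is a (normal) subgroup of $U_{an,\ldots,an,m}$, the integral \eqref{f9} vanishes identically, contradicting the nonvanishing of \eqref{f9} deduced from \eqref{f4} via $\nu_2$. The missing idea in your write-up is precisely this application of Lemma~\ref{lemuni1} to the $m$-last constant term; as written, your $L$-function route is only a sketch, and applied to the $m$-first ordering it would have to fail.
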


\begin{proof}
Recall that we are supposing that $m<n$ and there exist cuspidal theta
representations ${\Theta}_{n,\chi}^{(r)}$ and ${\Theta}_{m,\chi}^{(r)}$. We will derive a contradiction.
Form the residue
representation ${\mathcal E}_{abn+m,\chi}^{(r)}$ as above. Then the integral \eqref{f4}
is not zero for some choice of data. We claim that this implies that the integral
\begin{equation}\label{f9}
\int\limits_{U'(F)\backslash U'({\A})}
\int\limits_{U_{an,\ldots,an,m}(F)\backslash U_{an,\ldots,an,m}({\A})}E_{abn+m,\chi}^{(r)}(uu'h)\,\psi_{U'}(u')\,du\,du'
\end{equation}
is not zero for some choice of data. Notice the difference between the integrals \eqref{f9} and
\eqref{f6}. In \eqref{f6} the integration is over $U_{m,an,\ldots,an}(F)\backslash U_{m, an,\ldots,an}({\A})$, while in
\eqref{f9} it is over
$U_{an,\ldots,an,m}(F)\backslash U_{an,\ldots,an,m}({\A})$. These are two different groups.

To prove that the integral \eqref{f9} is not zero for some choice of data, we start with the integral \eqref{f4},
 which we have already shown is nonzero for some choice of data. Let $\nu_2$ be the Weyl element
$$\nu_2=\begin{pmatrix}  w_1\\ \vdots\\ w_b\\ w_0\end{pmatrix}\qquad w_0\in \Mat_{m\times (anb+m)}, \quad
w_j\in \Mat_{an\times (anb+m)},~ 1\le j\le b,$$
where the matrices $w_i$ are as above. Inserting $\nu_2$ into
\eqref{f4} and performing similar Fourier expansions, we deduce that the integral \eqref{f9}
is not zero for some choice of data. Here $U'=U_{an}\times\cdots\times U_{an}\times U_m$,
and the character $\psi_{U'}$ is defined accordingly.
But notice that $U_{abn,m}$ is a subgroup of 
$U_{an,\ldots,an,m}$ which is also the unipotent radical of a maximal parabolic subgroup. However, there
is no Weyl element $w$ such that $wU_{abn,m}w^{-1}$ is contained in $U_{n,\ldots,n,m}^-$. Hence,
by Lemma~\ref{lemuni1} we obtain that the integral \eqref{f9} is zero for all choices of data. This
is a contradiction.
\end{proof}

\section{The Condition on the Character $\chi$}\label{char}

Suppose that $\Theta_{n,\chi}^{(r)}$ is a cuspidal theta representation and $\chi=\chi_1^r$
for some character $\chi_1$. We shall derive a contradiction. The idea is similar to the
one we used in Section~\ref{unique}. 

First, by Flicker \cite{F}, we know that Theorem~\ref{th1} holds if $n=2$. Let $n\ge 3$.
Similarly to \cite{K-P}, we  may construct the theta representation $\Theta_{2,\chi}^{(r)}$ by means of a residue of an Eisenstein series. This is possible since $\chi=\chi_1^r$. This 
representation has a nonzero constant term and so is not cuspidal. Define $a$ and $b$ as in Section~\ref{unique} above
and let $m$ defined
in that section equal 2 in the present case. Then construct the residue representation 
${\mathcal E}_{abn+m,\chi}^{(r)}={\mathcal E}_{abn+2,\chi}^{(r)}$ as above. 
Although the representation  $\Theta_{2,\chi}^{(r)}$ is not cuspidal, most
of the results stated in Section~ \ref{unique} go through with small adaptations. In particular, we have
\begin{proposition}\label{propuni3}
Under the hypotheses of this section, ${\mathcal O}({\mathcal E}_{abn+m,\chi}^{(r)})=((an)^b2)$.
\end{proposition}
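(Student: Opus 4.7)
The plan is to imitate the proof of Proposition~\ref{propuni2}, keeping close track of the one step where the cuspidality of $\Theta_{m,\chi}^{(r)}$ was used in that argument and accounting for its failure when $m=2$. As there, two claims must be verified: first, that ${\mathcal E}_{abn+2,\chi}^{(r)}$ has no non-zero Fourier coefficient attached to any partition of $abn+2$ that is larger than or incomparable with $((an)^b 2)$; and second, that it does carry a non-zero Fourier coefficient attached to $((an)^b 2)$ itself.

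For the vanishing I would run the same reduction scheme as in Propositions~\ref{propuni1} and~\ref{propuni2}: expand a putative Fourier coefficient, conjugate by a suitable Weyl element, and use Proposition~\ref{prop21} to decompose the resulting constant-term contributions along tensor factors coming from the ${\mathcal E}_{an,\chi}^{(r)}$-blocks and the $\Theta_{2,\chi}^{(r)}$-block. In the earlier cuspidal setup the unwanted terms were annihilated because the relevant constant terms of each cuspidal building block were zero. Here $\Theta_{2,\chi}^{(r)}$ is not cuspidal; however, since it is obtained as the residue at $\delta_{B_2}^{\frac{r+1}{2r}}$ of an Eisenstein series on $GL_2^{(r)}$ induced from a character of the Borel, its only surviving constant term is that along the maximal unipotent of $GL_2$, and that constant term is a sum of characters of the torus. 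Such a term supports no Fourier coefficient attached to a non-trivial unipotent orbit on the $GL_2$-block, so any contribution it makes to a Fourier coefficient of ${\mathcal E}_{abn+2,\chi}^{(r)}$ is attached to a partition of $abn+2$ in which the $2$-block is replaced by two $1$'s, i.e.\ to a partition strictly smaller than $((an)^b 2)$. The orbit-comparison argument that killed the unwanted partitions in Proposition~\ref{propuni2} therefore continues to apply, and the vanishing goes through.

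For the non-vanishing claim I would repeat the Fourier-expansion and Weyl-conjugation by $\nu_1$ from Proposition~\ref{propuni2} essentially verbatim, arriving at the analogue of integral~\eqref{f6}. An inductive application of Proposition~\ref{prop21} then reduces the problem to the simultaneous non-vanishing of two Whittaker-type integrals: the Whittaker coefficient of $\Theta_{2,\chi}^{(r)}$, which is non-zero because every theta representation on $GL_2^{(r)}$ is generic (see Flicker~\cite{F} and Kazhdan-Patterson~\cite{K-P}), and the Whittaker coefficient of ${\mathcal E}_{an,\chi}^{(r)}$, which is non-zero by the very definition of $a$. Both being non-zero furnishes data making the Fourier coefficient attached to $((an)^b 2)$ non-vanishing.

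The main obstacle is the vanishing step: one has to make sure that the new, non-cuspidal contributions from the Borel constant term of $\Theta_{2,\chi}^{(r)}$ do not combine with the inductive pieces coming from the ${\mathcal E}_{an,\chi}^{(r)}$-factors to yield Fourier coefficients attached to orbits equal to or above $((an)^b 2)$. Once one verifies that the degenerate contribution sits only on orbits where the $2$-block is broken into $1$'s, and thus on partitions strictly smaller than $((an)^b 2)$, the rest of the argument transplants directly from Proposition~\ref{propuni2}.
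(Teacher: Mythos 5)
Your proposal is correct and takes essentially the same route as the paper, whose own proof of this proposition consists only of the remark that the arguments of Section~\ref{unique} ``go through with small adaptations'' once $\Theta_{2,\chi}^{(r)}$ is no longer cuspidal. The two adaptations you isolate --- that the non-cuspidality of $\Theta_{2,\chi}^{(r)}$ only contributes constant terms supported on the Borel of the $GL_2$-block, which interact only with partitions lying below $((an)^b2)$, and that genericity of $\Theta_{2,\chi}^{(r)}$ (valid since $2\le r$) replaces ``cuspidal, hence generic'' in the non-vanishing step --- are exactly the adaptations the paper has in mind.
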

Then we obtain a contradiction as in Section~\ref{unique}. Indeed, from Proposition~\ref{propuni3} 
we obtain that the Fourier coefficient 
\begin{equation*} 
\int\limits_{U'(F)\backslash U'({\A})}
\int\limits_{U_{an,\ldots,an,2}(F)\backslash U_{an,\ldots,an,2}({\A})}E_{abn+2,\chi}^{(r)}(uu'h)\,\psi_{U'}(u')\,du\,du'
\end{equation*}
is not zero for some choice of data. Notice that $U_{abn,2}$ is a subgroup of $U_{an,\ldots,an,2}$.
However, even though $\Theta_{2,\chi}^{(r)}$ is not cuspidal, the constant term of 
$E_{abn+2,\chi}^{(r)}$ along $U_{abn,2}$ is still zero for all choices of data.  Indeed, it is not hard to check that there is no 
Weyl element $w$ in $GL_{abn+2}$ such that $wU_{abn,2}w^{-1}$ is contained in 
$U^-_{an,\ldots,an,2}$. The vanishing of this constant term then follows  as in Lemma~\ref{lemuni1}.
However, this is a 
contradiction, and the result follows.

\end{document}